 \newtheorem{thm}{Theorem}[section]
 \newtheorem{lem}[thm]{Lemma}
 \theoremstyle{definition}
 \newtheorem{rem}[thm]{Remark}
 \numberwithin{equation}{section}
\newtheorem{theorem}{Theorem}[section]
\newtheorem{lemma}[theorem]{Lemma}
\newtheorem{proposition}[theorem]{Proposition}
\newcommand{\norm}[1]{\left\lVert#1\right\rVert} 
\theoremstyle{definition}
\theoremstyle{remark}
\begin{document}
\title{On the Constantin-Lax-Majda Model with Convection}
\author{Zhen Lei \footnotemark[1]\ \footnotemark[2]
\and Jie Liu    \footnotemark[1]\ \footnotemark[3]
\and Xiao Ren    \footnotemark[1]\ \footnotemark[4]
}
\renewcommand{\thefootnote}{\fnsymbol{footnote}}
\footnotetext[1]{School of Mathematical Sciences; LMNS and Shanghai
Key Laboratory for Contemporary Applied Mathematics, Fudan University, Shanghai 200433, P. R.China.} \footnotetext[2]{Email: zlei@fudan.edu.cn}
\footnotetext[3]{Email: j\_liu18@fudan.edu.cn}
\footnotetext[4]{Email: xiaoren18@fudan.edu.cn}
\date{\today}
\maketitle

\begin{abstract}
The well-known Constantin-Lax-Majda (CLM) equation, an important toy model of the 3D Euler equations without convection,  can develop finite time singularities \cite{CLM}.  De Gregorio modified the CLM model by adding a convective term \cite{DG}, which is known important for fluid dynamics \cite{HL, Okamoto}. Presented are two results on the De Gregorio model. The first one is the global well-posedness of such a model for general initial data with non-negative (or non-positive) vorticity which is based on a newly discovered conserved quantity. This verifies the numerical observations for such class of initial data. The second one is an exponential stability result of ground states, which is similar to the recent significant work of Jia, Steward and Sverak \cite{JiaSS}, with the zero mean constraint on the initial data being removable. The novelty of the method is the introduction of the new solution space $\mathcal{H}_{DW}$ together with a new basis and an effective inner product of $\mathcal{H}_{DW}$.
\end{abstract}

\maketitle





\section{Introduction}

The classical Constantin-Lax-Majda (CLM) model is
$$\partial_t\omega = \omega H\omega,$$
where $\omega: \mathbb{R}_+ \times \Omega \to \mathbb{R}$ with $\Omega$ being the whole real line $\mathbb{R}$ or the circle $\mathcal{S}^1$,  and $H$ is the Hilbert transform. It is one of the  famous models which are proposed to analyze the potential singularities of 3D Euler equations \cite{CLM}, mimicking the essence of the 3D mechanism and, at the
same time, being feasible for mathematical analysis. On one hand, the blowup mechanism for the CLM model has been well understood by experts (see, for instance, \cite{CLM}). On the other hand, as being pointed out by Okamoto \cite{Okamoto}, the CLM model ignores the role of convection, which we now know is important, see \cite{HL, Okamoto}.

De Gregorio \cite{DG} suggested to include a convective term to the CLM model. The resulting modified equation reads
\begin{equation}\label{DG}
\partial_t\omega + u\partial_\theta \omega = \omega \partial_\theta u,\quad \partial_\theta u = H\omega.
\end{equation}
Here $H$ denotes the Hilbert transform defined as
\begin{equation}\nonumber
H\omega(\theta) = \frac{1}{2\pi}{\rm P.V.}\int_{-\pi}^\pi\cot\frac{\theta - \phi}{2}\omega(\phi)d\phi
\end{equation}
if $\omega$ is defined on the circle $\mathcal{S}^1$ and
\begin{equation}\nonumber
H\omega(\theta) =  \frac{1}{\pi}{\rm P.V.}\int_{-\infty}^\infty\frac{\omega(\phi)}{\theta - \phi}d\phi
\end{equation}
if $\omega$ is defined on the whole line $\mathbb{R}$.
Note that \eqref{DG} is still incomplete, since $u$ is determined by $\omega$ only up to a constant. One needs to impose certain gauge conditions on $u$ such as $\int_{\mathcal{S}^1} u d\theta = 0$ or $u(t,0) \equiv 0$. Solutions to \eqref{DG} under different gauges are equivalent up to translations, see \cite{JiaSS}.

Numerical simulations of De Gregorio model \cite{DG} seem to suggest that there is no blow-up from smooth initial data on $\mathcal{S}^1$. Moreover, as mentioned by authors in \cite{JiaSS}, their numerical experiments seem to suggest that smooth solutions even converge to equilibria of \eqref{DG}(and usually to the ground states $A\sin(\theta+\theta_0)$). On the other hand, finite time singularity of De Gregorio model on the whole line $\mathbb{R}$ has recently been proved for some smooth odd initial data, see \cite{CHH}. We also mention that one can have finite time blow-up for initial data with lower regularity in which the equations are nevertheless still locally well-posed, see \cite{EJ}.

In this paper, we will establish global well-posedness for general initial data with non-negative vorticity $\omega(0, \cdot) \geq 0$ (or non-positive) both on $S^1$ and on $\mathbb{R}$, but without any smallness assumptions. In view of the work in \cite{CHH} where finite time singularities of solutions are proven for a class of sign-changing initial data, the sign condition on the initial data in the following result can not be removed in the whole line case.
\begin{thm}\label{GW}
Let $\Omega$ be the whole line $\mathbb{R}$ or the circle $\mathcal{S}^1$. Let $k\ge 1$ be an integer, and  the initial data have a compact support and satisfy $\omega_{\scriptscriptstyle in}\geq 0$,  $ \sqrt{\omega_{\scriptscriptstyle in}} \in H^k(\Omega)$. Suppose that $u$ satisfies the gauge $\int_{\mathcal{S}^1}u(t, \theta)d\theta \equiv 0$ if $\Omega = \mathcal{S}^1$ or $u(t, 0) \equiv 0$ if $\Omega = \mathbb{R}$.Then the De Gregorio modification of the CLM model \eqref{DG} is globally well-posed in $C([0,T];H^k(\Omega))$ and $$\left\|\sqrt{\omega(t, \cdot)}\right\|_{H^1(\Omega)} = \|\sqrt{\omega_{\scriptscriptstyle in}}\|_{H^1(\Omega)}$$ for all $t \geq 0$.
\end{thm}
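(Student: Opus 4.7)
The plan is to introduce the new unknown $v=\sqrt{\omega}$, which is well defined by the sign assumption, and to discover that $\|v\|_{H^1(\Omega)}$ is conserved along the flow. Once that identity is in hand, the a priori bounds needed to continue a local solution globally come essentially for free. I would start by disposing of the local theory: since $H^k(\Omega)$ is a Banach algebra for $k\ge 1$, the hypothesis $\sqrt{\omega_{\scriptscriptstyle in}}\in H^k$ gives $\omega_{\scriptscriptstyle in}=v_{\scriptscriptstyle in}^2\in H^k$, so standard hyperbolic local well-posedness for \eqref{DG} yields a maximal solution $\omega\in C([0,T^*);H^k(\Omega))$. Non-negativity is propagated because the equation is pointwise linear in $\omega$: in Lagrangian variables, $\omega(t,X(t))=\omega_{\scriptscriptstyle in}(X(0))\exp(\int_0^t\partial_\theta u(s,X(s))\,ds)\ge 0$. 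Taking square roots gives the closed subsystem
\[
\partial_t v+u\partial_\theta v=\tfrac12 v\partial_\theta u,\qquad \partial_\theta u=H(v^{2}),
\]
in which no square root of a possibly vanishing quantity is ever taken during the evolution.

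The heart of the proof is then a direct energy computation. Multiplying the $v$-equation by $2v$ and integrating gives $\frac{d}{dt}\int v^{2}\,d\theta=2\int v^{2}\partial_\theta u\,d\theta=2\int \omega H\omega\,d\theta=0$ by skew-symmetry of $H$. Differentiating the $v$-equation in $\theta$ and writing $w=\partial_\theta v$, one obtains, using $\partial_\theta^{2}u=H\partial_\theta\omega=2H(vw)$,
\[
\partial_t w+u\partial_\theta w+\tfrac12 w\partial_\theta u=v\,H(vw).
\]
Testing against $2w$, the transport and zero-order terms cancel exactly (since $-\int u\partial_\theta(w^{2})=\int w^{2}\partial_\theta u$), leaving
\[
\tfrac{d}{dt}\int w^{2}\,d\theta=2\int (vw)\,H(vw)\,d\theta=0,
\]
again by skew-symmetry of the Hilbert transform. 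Combined with the $L^2$ identity, this gives the stated conservation of $\|\sqrt{\omega(t,\cdot)}\|_{H^{1}}$.

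Global extension then follows routinely. The Sobolev embedding $H^{1}(\Omega)\hookrightarrow L^{\infty}(\Omega)$ (valid for both $\mathcal{S}^{1}$ and $\mathbb{R}$) together with the algebra property gives uniform bounds $\|v\|_{L^{\infty}}\le C_{1}$, $\|\omega\|_{H^{1}}\le C_{2}$, and hence
\[
\|\partial_\theta u\|_{L^{\infty}}=\|H\omega\|_{L^{\infty}}\le C\|\omega\|_{H^{1}}\le C_{3},
\]
all controlled by $\|\sqrt{\omega_{\scriptscriptstyle in}}\|_{H^{1}}$ alone. With this uniform Lipschitz bound on $u$, a standard Beale--Kato--Majda-type continuation criterion --- obtained from commutator energy estimates for $\partial_\theta^{k}\omega$ --- extends the solution to any $T<\infty$. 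On $\mathbb{R}$, the gauge $u(t,0)\equiv 0$ fixes the origin and the at-most-linear growth of $u$ in $\theta$ bounds the growth of the support of $\omega(t,\cdot)$ on compact time intervals.

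The main conceptual obstacle is realizing that $\sqrt{\omega}$, not $\omega$ itself, is the natural unknown carrying a coercive conserved quantity. The main technical obstacle is justifying the formal calculations for initial data of the low regularity permitted here, where $\omega$ may vanish on large sets: I would handle this by approximating $v_{\scriptscriptstyle in}$ by smooth non-negative data $v_{\scriptscriptstyle in}^{(\varepsilon)}$, running the computation rigorously for the classical solutions, and passing to the limit using uniform $H^{1}$-conservation and weak-$*$ compactness.
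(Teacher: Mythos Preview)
Your proposal is correct and follows essentially the same route as the paper: both identify the conservation of $\|\sqrt{\omega}\|_{H^1}$ via the equation for $v=\sqrt{\omega}$, and the energy computation you wrote out (with $w=\partial_\theta v$ and the cancellation leaving $2\int vw\,H(vw)=0$) is exactly the identity the paper derives, just organized slightly differently. The one point of divergence is in the local theory: the paper does \emph{not} start from local well-posedness of the $\omega$-equation and then take a square root, because $\omega\in H^k$ with $\omega\ge 0$ does not by itself yield $\sqrt{\omega}\in H^1$; instead it proves local well-posedness directly for the closed $v$-equation (your subsystem) in $H^k$, via an iteration and mollification argument. Your final paragraph anticipates exactly this fix --- approximating $v_{\scriptscriptstyle in}$ and running the argument on classical solutions of the $v$-equation --- so your plan effectively converges to the paper's, but you should be aware that the approximation must be carried out at the level of the $v$-equation rather than the $\omega$-equation, and that strong (not merely weak-$*$) convergence is needed to pass the $H^1$ \emph{equality} to the limit.
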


\begin{rem} \label{rem-gauge}
Our existence result holds true for different choices of gauges for the velocity $u$. For $\Omega=\mathcal{S}^1$, one could replace the condition $\int_{\mathbb{S}^1}u(t, \theta)d\theta \equiv 0$ with $u(t,0)\equiv 0$. The latter gauge will be convenient for our stability result in Section 3.
\end{rem}
\begin{rem}\label{rem1}
It is clear that $\partial_\theta(\sqrt{\omega_{\scriptscriptstyle in}}) \in L^2$ is a reasonable assumption on the initial data $\omega_{\scriptscriptstyle in}$ if  $\omega_{\scriptscriptstyle in}$ is strictly positive or degenerates at its zeros at an order $\gamma > 1$.  Our key observation here is the a priori conservation of the quantity $\|\partial_\theta(\sqrt{\omega(t, \cdot)})\|_{L^2}$ in time (see Section 2), which seems totally new in the literatures (see \cite{JiaSS} for other known conserved quantities). Note that
$$\partial_t\partial_\theta \omega + u\partial^2_{\theta}\omega = \omega \partial^2_{\theta} u,$$
which has been observed by Jia, Steward and Sverak \cite{JiaSS}. Thus zeros of $\omega(t, \cdot)$ and the values of  $\partial_\theta\omega(t, \cdot)$ at these zeros are transported by $u$, which makes sense of $\|\partial_\theta(\sqrt{\omega})\|_{L^2}$.
\end{rem}
\begin{rem}
Theorem \ref{GW} also holds true for non-positive initial data $\omega_{\scriptscriptstyle in}$, in which case we require that $\omega_{\scriptscriptstyle in}\in L^1(\Omega)$ and $\partial_\theta(\sqrt{-\omega_{\scriptscriptstyle in}})\in L^2(\Omega)$. In fact, we only need to consider $\bar{\omega}=-\omega$, which satisfies:
\begin{equation}\label{DG-reverse}
-\partial_t\bar{\omega} + \bar{u}\partial_\theta\bar{\omega} = \bar{\omega} \partial_\theta\bar{u},\quad \partial_\theta\bar{u} = H\bar{\omega}.
\end{equation}
Note that equation \eqref{DG-reverse} is simply a time reversed version of \eqref{DG} and the proof of Theorem \ref{GW} in Section 2 still works.
\end{rem}

Our second result concerns the recent interesting work of Jia, Steward and Sverak \cite{JiaSS} in which the authors proved, under the mean zero constraint on the initial data, the exponential stability of ground states of the De Gregorio modification of the CLM model \eqref{DG} on the circle for initial data $\theta^{- \gamma}\eta_{\scriptscriptstyle in} \in L^2$, $\frac{3}{2} < \gamma < 2$, where $\eta_{\scriptscriptstyle in} = \omega_{\scriptscriptstyle in} + \sin \theta$. Their proof  involves some deep spectral theories and complex variable methods, together with many novel observations on the structure of the De Gregorio model formulated as a dynamical system. Here we prove a similar exponential stability result to theirs which implies Theorem 1.1 in \cite{JiaSS}, using a direct energy method (the definition of the space $\mathcal{H}_{DW}$ will be introduced after stating the theorem).

\begin{thm}\label{grandstates}
Let $0<\beta<\frac38$ be a given constant and $\omega_{\scriptscriptstyle in}(\theta) = -\sin\theta + \eta_{\scriptscriptstyle in}(\theta)$ with $\theta \in \mathcal{S}^1$. Suppose that $\eta_{\scriptscriptstyle in} \in \mathcal{H}_{DW}$ and $\int_{\mathcal{S}^1} \eta_{\scriptscriptstyle in} d\theta =0$. There exists $\delta_0 > 0$ such that if $\|\eta_{\scriptscriptstyle in}\|_{\mathcal{H}_{DW}} < \delta_0$, then the De Gregorio modification of the CLM model \eqref{DG} under the gauge $u(t,0)\equiv 0$ with initial data $\omega(0, \theta) = \omega_{\scriptscriptstyle in}(\theta)$ is globally well-posed and $\|\omega(t, \cdot) + \sin\theta\|_{\mathcal{H}_{DW}}  \lesssim e^{- \beta t} \|\eta_{\scriptscriptstyle in}\|_{\mathcal{H}_{DW}}$ for all $t \geq 0$.
\end{thm}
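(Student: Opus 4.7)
The plan is a direct energy argument in $\mathcal{H}_{DW}$. Writing $\omega(t,\theta) = -\sin\theta + \eta(t,\theta)$ and using $H(-\sin\theta)=\cos\theta$, the gauge $u(t,0)\equiv 0$ forces the decomposition $u = \sin\theta + v$ with $\partial_\theta v = H\eta$ and $v(t,0)=0$. Plugging into \eqref{DG} and cancelling the stationary part gives the perturbation equation
\begin{equation*}
\partial_t \eta + L\eta = \eta H\eta - v\,\partial_\theta\eta,
\end{equation*}
where the linearized operator is
\begin{equation*}
L\eta := \sin\theta\,\partial_\theta\eta - \cos\theta\,\eta + \sin\theta\,H\eta - \cos\theta\cdot v,\qquad \partial_\theta v = H\eta,\ v(t,0)=0.
\end{equation*}
The mean zero condition $\int_{\mathcal{S}^1}\eta\,d\theta = 0$ is propagated by the flow and will be used throughout.

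The heart of the proof is the choice of the inner product on $\mathcal{H}_{DW}$ making $L$ coercive. Since multiplication by $\sin\theta$ or $\cos\theta$ shifts Fourier modes by $\pm 1$, while $H$ and the primitive $\partial_\theta^{-1}$ are diagonal, $L$ is tridiagonal in the Fourier basis $\{e^{in\theta}\}_{n\neq 0}$. A direct computation shows that $L\sin\theta = 0$ (the scaling direction of the ground states $A\sin\theta$), whereas the gauge $u(t,0)=0$ breaks translation invariance so that $\cos\theta$ is no longer in the kernel. My plan is therefore to construct a new basis $\{\psi_n\}$ with positive weights $w_n$ such that, in the inner product
\begin{equation*}
\langle \eta,\tilde\eta\rangle_{\mathcal{H}_{DW}} := \sum_n w_n\,\hat\eta_n\,\overline{\hat{\tilde\eta}_n}
\end{equation*}
(with $\hat\eta_n$ the coefficients in the new basis), the symmetrized operator $(L+L^*)/2$ is bounded below by $\beta\cdot\mathrm{Id}$ on the mean zero subspace, i.e.
\begin{equation*}
\langle L\eta,\eta\rangle_{\mathcal{H}_{DW}} \geq \beta\,\|\eta\|_{\mathcal{H}_{DW}}^2
\end{equation*}
for all mean zero $\eta\in\mathcal{H}_{DW}$. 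The scaling direction $\sin\theta$ has to be excluded by the very definition of $\mathcal{H}_{DW}$ (for instance by choosing weights that effectively force $\sin\theta\notin\mathcal{H}_{DW}$, or by using a basis $\{\psi_n\}$ whose span avoids this mode). This coercivity is the main obstacle: it amounts to proving that an explicit infinite tridiagonal matrix with entries built from ratios of the weights is diagonally dominant by at least $\beta$, and the restriction $\beta<3/8$ should emerge from the sharp asymptotic rate of this dominance as $|n|\to\infty$.

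Once the coercivity is in place, the nonlinearity is handled by standard product estimates in $\mathcal{H}_{DW}$. Using the fact that $v$ is essentially one derivative smoother than $\eta$ and that $\mathcal{H}_{DW}$ should embed into an algebra, one expects
\begin{equation*}
|\langle \eta H\eta - v\,\partial_\theta\eta,\,\eta\rangle_{\mathcal{H}_{DW}}| \leq C\,\|\eta\|_{\mathcal{H}_{DW}}^3,
\end{equation*}
which combined with the coercivity yields the differential inequality
\begin{equation*}
\tfrac{d}{dt}\|\eta\|_{\mathcal{H}_{DW}}^2 \leq -2\beta\,\|\eta\|_{\mathcal{H}_{DW}}^2 + 2C\,\|\eta\|_{\mathcal{H}_{DW}}^3.
\end{equation*}
Choosing $\delta_0$ so small that $C\delta_0 < \beta/2$, a standard bootstrap combined with local well-posedness in $\mathcal{H}_{DW}$ (proved via the same product estimates, in the spirit of Theorem \ref{GW}) yields global existence and $\|\eta(t)\|_{\mathcal{H}_{DW}}\lesssim e^{-\beta t}\|\eta_{\scriptscriptstyle in}\|_{\mathcal{H}_{DW}}$, i.e.\ $\|\omega(t,\cdot)+\sin\theta\|_{\mathcal{H}_{DW}}\lesssim e^{-\beta t}\|\eta_{\scriptscriptstyle in}\|_{\mathcal{H}_{DW}}$ as claimed.
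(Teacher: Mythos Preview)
Your outline is the right shape --- linear coercivity in a well-chosen inner product plus a cubic bound on the nonlinearity --- and matches the paper's strategy. But two of the three pillars are left as black boxes, and one of them rests on a misconception.

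\textbf{The coercivity step.} You say ``construct a new basis $\{\psi_n\}$ with positive weights $w_n$'' and call this the main obstacle; it is, and you have not touched it. The paper's entire novelty is the discovery that the weight $\sin^{-2}\tfrac{\theta}{2}$ on $|\partial_\theta\eta|^2$ makes the specific vectors
\[
\tilde e^{(o)}_k=\frac{\sin(k+1)\theta}{k+1}-\frac{\sin k\theta}{k},\qquad
\tilde e^{(e)}_k=\frac{\cos(k+1)\theta-1}{k+1}-\frac{\cos k\theta-1}{k}
\]
orthonormal, and that in this basis the tridiagonal system becomes $\partial_t\tilde\eta_k=-d_k\tilde\eta_{k-1}-(d_{k+1}-d_k)\tilde\eta_k+d_{k+1}\tilde\eta_{k+1}$ with $d_{k+1}-d_k\ge\tfrac38$, whence the rate $\tfrac38$. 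None of this is guessable from the tridiagonal structure alone; you would need to produce these objects, or something equivalent.

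\textbf{The kernel.} Your claim that the gauge $u(t,0)=0$ removes $\cos\theta$ from the kernel is misleading: one computes $L(\cos\theta-1)=0$, and $\cos\theta-1$ \emph{does} belong to $\mathcal H_{DW}$ (its derivative $-\sin\theta=-2\sin\tfrac{\theta}{2}\cos\tfrac{\theta}{2}$ cancels the weight). So $L$ is not coercive on $\mathcal H_{DW}$ itself. The mean-zero constraint does exclude this direction, but the paper does \emph{not} establish coercivity directly on the mean-zero subspace of $\mathcal H_{DW}$; instead it works in the quotient $\tilde Y=\mathcal H_{DW}/\mathbb R(\cos\theta-1)$, proves the $\tfrac38$-decay there, and then uses the mean-zero condition to recover equivalence $\|\eta\|_{\mathcal H_{DW}}\lesssim\|\eta\|_{\tilde Y}$. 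Your plan to obtain $\langle L\eta,\eta\rangle_{\mathcal H_{DW}}\ge\beta\|\eta\|_{\mathcal H_{DW}}^2$ directly on mean-zero $\eta$ would require dealing with the cross term coming from the evolution of the $\cos\theta-1$ coefficient, which you have not addressed.

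\textbf{The nonlinear estimate.} ``$\mathcal H_{DW}$ should embed into an algebra'' is not enough. The pairing $\langle v\partial_\theta\eta,\eta\rangle_g$ produces $\int \frac{v\,\partial_\theta^2\eta\,\partial_\theta\eta}{\sin^2\frac{\theta}{2}}\,d\theta$, which loses a derivative; the paper recovers it by integrating by parts and using the pointwise bounds $\bigl\|\eta/\sin\tfrac{\theta}{2}\bigr\|_{L^\infty}+\bigl\|v/\sin\tfrac{\theta}{2}\bigr\|_{L^\infty}\lesssim\|\eta\|_{\mathcal H_{DW}}$. You would need an argument of this type, not a product estimate.
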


There are two main ingredients in our proof. For simplicity, let us take the odd perturbations as an example (the generic perturbations are a little bit more complicated and are treated in Section 3). Firstly, we define a new effective Hilbert space by
\begin{equation}\label{space}
\mathcal{H}_{DW} = \big\{\eta \in H^1(\mathcal{S}^1)\big| \, \eta(0) =  0,\,\, \int_{- \pi}^\pi\frac{|\partial_\theta\eta|^2}{\sin^2\frac{\theta}{2}}d\theta < \infty\big\}.
\end{equation}
The inner product of $(\mathcal{H}_{DW}, g)$ is defined to be
\begin{equation}\label{innerproduct}
\langle\xi, \eta\rangle_g = \frac{1}{4\pi}\int_{- \pi}^\pi\frac{\partial_\theta\xi \partial_\theta\eta}{\sin^2\frac{\theta}{2}}d\theta.
\end{equation}
Secondly, by introducing the following new vectors
\begin{equation}\label{basis}
\widetilde{e}^{(o)}_{k} = \frac{\sin[(k + 1)\theta]}{k + 1} - \frac{\sin(k\theta)}{k},\quad k \geq 1,
\end{equation}
we find that
\begin{equation}\label{orthogonal}
\langle\widetilde{e}^{(o)}_{k}, \widetilde{e}^{(o)}_{l}\rangle_g = \delta_{kl},\quad k, l \geq 1.
\end{equation}
We remark that the inner product in \eqref{innerproduct} and the exact form of the basis vectors in \eqref{basis} are still mysterious to us even though we are fortunate enough to find them accidentally. With the above \textit{accidental} discoveries, we are able to obtain the exponential decay for the linearized equations by using a direct energy estimate, providing a different angle for the highly non-trivial and interesting method of Jia, Steward and Sverak \cite{JiaSS}.

By removing the constraint $\int_{\mathcal{S}^1} \omega_{\scriptscriptstyle in} d\theta =0$, Theorem \ref{grandstates} can be generalized as follows, which will be proved based on the same ideas.
\begin{thm}\label{grandstates'}
Let $0<\beta<\frac38$ be a given constant and $\omega_{\scriptscriptstyle in}(\theta) = -\sin\theta + \zeta_{\scriptscriptstyle in}(\theta)$ with $\theta \in \mathcal{S}^1$. Suppose that $\zeta_{\scriptscriptstyle in} \in \mathcal{H}_{DW}$. There exists $\delta_1 > 0$ such that if $\|\zeta_{\scriptscriptstyle in}\|_{\mathcal{H}_{DW}} < \delta_1$, then the De Gregorio modification of the CLM model \eqref{DG} under the gauge $u(t,0)\equiv 0$ with initial data $\omega(0, \theta) = \omega_{\scriptscriptstyle in}(\theta)$ is globally well-posed and $\|\omega(t, \cdot) + \sin\theta + \alpha (\cos \theta -1)\|_{\mathcal{H}_{DW}}  \lesssim e^{- \beta t} \|\zeta_{\scriptscriptstyle in}\|_{\mathcal{H}_{DW}}$ for all $t \geq 0$, where $\alpha=\int_{\mathcal{S}^1} \zeta_{\scriptscriptstyle in} d\theta$.
\end{thm}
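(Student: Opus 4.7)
The plan is to linearize \eqref{DG} around a one-parameter family of shifted steady states. Define
$$\omega_s(\theta):=-\sin\theta-\alpha(\cos\theta-1),\qquad u_s(\theta):=\sin\theta+\alpha\cos\theta-\alpha.$$
A direct computation gives $H\omega_s=\cos\theta-\alpha\sin\theta=\partial_\theta u_s$, $u_s(0)=0$, and $u_s\equiv -\omega_s$, so $u_s\partial_\theta\omega_s\equiv\omega_s\partial_\theta u_s$ and $(\omega_s,u_s)$ is a stationary solution of \eqref{DG} under the gauge $u(t,0)\equiv 0$. Since $\mathcal{H}_{DW}\hookrightarrow L^\infty(\mathcal{S}^1)$ via $|f(\theta)|=\bigl|\int_0^\theta\partial_\phi f\,d\phi\bigr|\lesssim\|f\|_{\mathcal{H}_{DW}}$, one has $|\alpha|\lesssim\|\zeta_{\scriptscriptstyle in}\|_{\mathcal{H}_{DW}}<\delta_1$, so $\omega_s$ is a small perturbation of the ground state $-\sin\theta$.

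Setting $\omega=\omega_s+\eta$ and $u=u_s+v$ and subtracting the stationary equation yields
\begin{equation}\label{plan-perturb}
\partial_t\eta+u_s\partial_\theta\eta+v\partial_\theta\omega_s-\omega_s\partial_\theta v-\eta\partial_\theta u_s=\eta\partial_\theta v-v\partial_\theta\eta.
\end{equation}
The initial data $\eta(0,\theta)=\zeta_{\scriptscriptstyle in}(\theta)+\alpha(\cos\theta-1)$ lies in $\mathcal{H}_{DW}$ with $\|\eta(0)\|_{\mathcal{H}_{DW}}\lesssim\|\zeta_{\scriptscriptstyle in}\|_{\mathcal{H}_{DW}}$, and the pointwise condition $\eta(t,0)=0$ is preserved: $u(t,0)\equiv 0$ fixes the characteristic through $\theta=0$, along which $\omega$ evolves multiplicatively from $\omega(0,0)=0$, while $\omega_s(0)=0$.

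Taking the $g$-inner product of \eqref{plan-perturb} with $\eta$, I would split the result into its $\alpha=0$ contribution and the $O(\alpha)$ correction. The $\alpha=0$ part coincides with the quantity estimated in the proof of Theorem \ref{grandstates}: expanding $\eta$ in the orthonormal basis \eqref{basis} together with the even-sector analogue built in Section 3 yields the dissipation bound $\le -2\beta_0\|\eta\|_g^2$ for any $\beta_0<\tfrac{3}{8}$. Each remaining term is linear in $\alpha$ and of the shape $\alpha\int\sin^{-2}(\theta/2)\,\partial_\theta\eta\cdot Q(\theta,\eta,v)\,d\theta$, where $Q$ is built from $\partial_\theta\eta$ or $\partial_\theta v$ multiplied by a trigonometric polynomial of degree at most one; weighted Cauchy--Schwarz, the Hardy inequality $\int|\eta|^2/\sin^2(\theta/2)\,d\theta\lesssim\|\eta\|_g^2$ (valid because $\eta(t,0)=0$), and the $L^2$-boundedness of $H$ each control such a term by $C|\alpha|\|\eta\|_g^2$. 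The nonlinear right-hand side is $O(\|\eta\|_g^3)$ by the same embedding arguments used for Theorem \ref{grandstates}.

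Combining the three contributions,
$$\frac{d}{dt}\|\eta\|_g^2\le-\bigl(2\beta_0-C|\alpha|-C\|\eta\|_g\bigr)\|\eta\|_g^2,$$
and choosing $\delta_1$ small in terms of $\beta$ and $C$ forces $\frac{d}{dt}\|\eta\|_g^2\le -2\beta\|\eta\|_g^2$, giving $\|\eta(t)\|_g\lesssim e^{-\beta t}\|\zeta_{\scriptscriptstyle in}\|_{\mathcal{H}_{DW}}$ together with global well-posedness by a standard continuation argument. I expect the main obstacle to be the $O(\alpha)$ absorption. The correction $-\alpha(\cos\theta-1)$ lives at Fourier frequencies $0$ and $1$, so the new cross terms involve only the first few basis vectors $\widetilde{e}^{(o)}_k$ and their even counterparts; verifying that this low-frequency interaction produces a genuine $O(\alpha)$ perturbation in the $g$-metric, rather than an $O(1)$ cancellation of the spectral gap established for Theorem \ref{grandstates}, requires careful bookkeeping against the orthogonality relation \eqref{orthogonal}.
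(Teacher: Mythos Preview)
Your overall strategy---shift to the mean-zero perturbation $\eta=\omega+\sin\theta+\alpha(\cos\theta-1)$, recognize the $\alpha=0$ part as the linearization $L$ at the ground state, and absorb the $O(\alpha)$ correction---is exactly the route the paper takes. There is, however, a genuine gap in your energy estimate for the linear part.

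You assert that taking the $g$-inner product and expanding in the basis $\tilde e^{(o)}_k,\tilde e^{(e)}_k$ ``yields the dissipation bound $\le -2\beta_0\|\eta\|_g^2$ for any $\beta_0<\tfrac38$''. This is not what the proof of Theorem~\ref{grandstates} establishes. The computation there is carried out in the \emph{quotient} inner product $\tilde g$ on $\tilde Y=Y/\mathbf{R}\tilde e_0^{(e)}$, giving $\langle L\eta,\eta\rangle_{\tilde g}\le -\tfrac38\|\eta\|_{\tilde Y}^2$; the $\mathcal{H}_{DW}$-norm enters only at the end via the equivalence \eqref{tildeY-Hdw}. The distinction matters because $L\tilde e_0^{(e)}=0$: in the full $g$-inner product you pick up the extra term
\[
\tilde\eta_0^{(e)}\,\partial_t\tilde\eta_0^{(e)}=\tilde\eta_0^{(e)}\sum_{k\ge1}\frac{k^2-k-1}{k^2(k+1)^2}\tilde\eta_k^{(e)},
\]
which is neither sign-definite nor addressed in your proposal, and there is no coercivity at all in the $\tilde e_0^{(e)}$ direction. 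Consequently the inequality $\langle L\eta,\eta\rangle_g\le -2\beta_0\|\eta\|_g^2$ with $\beta_0$ arbitrarily close to $\tfrac38$ does not follow from Section~3.1.

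The repair is precisely the paper's: run the differential inequality in $\|\cdot\|_{\tilde Y}$, and when you need the weighted-integral form of the inner product to estimate the $O(\alpha)$ and nonlinear terms, pair against $\eta-\tilde\eta_0^{(e)}(\cos\theta-1)$ rather than $\eta$ in the second slot of $g$. After this change, your $O(\alpha)$ bound goes through (the key cancellation being $(1-\cos\theta)/\sin^2(\theta/2)=2$, which removes the weight on the second-derivative piece and leaves integrals handled by \eqref{eta/theta}, \eqref{v/theta}), as does the nonlinear estimate from Theorem~\ref{grandstates}.
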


For the first excited state $\sin(2\theta)$, we have the following result on the linearized level, for even initial data.
\begin{thm}\label{excitedSL}
The linearized equation of \eqref{DG} at $-\sin 2\theta$ reads
\begin{equation} \label{L2}
\partial_t\eta = L_{2}\eta,
\end{equation}
with
\begin{equation}\label{LOES}
L_{2}\eta = - \frac{1}{2}\sin(2\theta)\partial_\theta\eta + \cos(2\theta)\eta - \sin(2\theta)\partial_\theta v + 2\cos(2\theta)v,
\end{equation}
where $v$ satisfies $\partial_\theta v=H\eta$ and the gauge $\int_{\mathcal{S}^1} v\, d\theta \equiv0$. For even initial data $\eta(0, \theta) = \sum_{k \geq 1}\eta_{k}\cos(k\theta)$, \eqref{L2} is well-posed in $C([0,T];H^\frac32)$ for any $0<T<\infty$ and satisfies
\begin{equation} \label{L2X}
\frac{d}{dt} \norm{\eta}_X^2 +\frac32 |\eta_1^{(e)}|^2 = 0,\quad \text{for} \quad \eta = \sum_{k \geq 1}\eta_k^{(e)}\cos(k\theta).
\end{equation}
Here $\norm{\eta}_X^2 \overset{\Delta}{=} \sum_{k\neq 2} {g_k^{(e)}} |\eta_k^{(e)}|^2$ with $g_1^{(e)}=1$ and $g_k^{(e)}\sim k^3$. $\eta_2^{(e)}$ grows at most linearly.
\end{thm}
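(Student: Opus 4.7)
The plan is to diagonalize $L_2$ in the Fourier cosine basis $\{\cos(k\theta)\}_{k\geq 1}$, derive the resulting ODE system for the coefficients $\eta_k(t)$, and construct the weights $g_k^{(e)}$ directly so that all cross terms in the energy identity cancel. Using $H\cos(k\theta)=\sin(k\theta)$ together with the mean-zero gauge, which gives $v=-\cos(k\theta)/k$ when $\eta=\cos(k\theta)$, a direct computation from \eqref{LOES} yields the compact formula
$$
L_2\cos(k\theta) = \frac{k^2-4}{4k}\cos((k-2)\theta) - \frac{(k-2)^2}{4k}\cos((k+2)\theta).
$$
Several features are structurally important: $L_2\cos(2\theta)=0$ (the kernel direction coming from rotations of the profile $-\sin 2\theta$); the $k=1$ case produces a \emph{diagonal} contribution $-\tfrac{3}{4}\cos\theta$ because $\cos(-\theta)=\cos\theta$; and $L_2$ is tridiagonal with shift $\pm 2$, so it preserves parity in $k$. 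Reading off coefficients gives the ODE system
\begin{align*}
\dot\eta_1 &= -\tfrac{3}{4}\eta_1 + \tfrac{5}{12}\eta_3,\qquad \dot\eta_2 = \tfrac{3}{4}\eta_4,\\
\dot\eta_m &= \tfrac{m(m+4)}{4(m+2)}\eta_{m+2} - \tfrac{(m-4)^2}{4(m-2)}\eta_{m-2},\quad m\geq 3.
\end{align*}
Crucially, $\dot\eta_4$ carries no $\eta_2$ contribution because $(m-4)^2$ vanishes at $m=4$; this is the algebraic reason the norm $\|\cdot\|_X$ can omit $k=2$ while still closing an energy identity.

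I would then define $g_k^{(e)}$ by requiring that every $\eta_m\eta_{m+2}$ cross term in $\frac{d}{dt}\sum_{k\neq 2}g_k^{(e)}\eta_k^2$ vanishes. Matching coefficients yields the recursion
$$
g_3^{(e)} = \tfrac{5}{3}\,g_1^{(e)},\qquad g_{m+2}^{(e)} = g_m^{(e)}\cdot\frac{m^2(m+4)}{(m+2)(m-2)^2},\quad m\geq 3,
$$
with the even subsequence following the same recursion from a free $g_4^{(e)}$. Normalizing $g_1^{(e)}=1$, the only surviving term on the differentiated side is $-\tfrac{3}{4}|\eta_1^{(e)}|^2$, which is precisely \eqref{L2X}. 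For the asymptotic $g_k^{(e)}\sim k^3$, a Taylor expansion gives
$$
\log\frac{g_{m+2}^{(e)}}{g_m^{(e)}} = \frac{6}{m} + O(m^{-2}),
$$
and telescoping over the odd (resp. even) subsequence gives $\log g_k^{(e)} \sim 3\log k$.

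The remaining claims follow quickly. Since \eqref{L2X} shows $\|\eta\|_X$ is non-increasing, every mode with $k\neq 2$ stays uniformly bounded; in particular $|\eta_4(t)|\leq (g_4^{(e)})^{-1/2}\|\eta(0)\|_X$, so integrating $\dot\eta_2=\tfrac{3}{4}\eta_4$ yields $|\eta_2(t)|\leq |\eta_2(0)|+Ct$, the at-most-linear growth. For well-posedness in $C([0,T];H^{3/2})$, the weight asymptotic $g_k^{(e)}\sim k^3$ makes $\|\cdot\|_X$ equivalent to $H^{3/2}$ restricted to $\mathrm{span}\{\cos(k\theta):k\neq 2\}$, and the linear bound on $\eta_2$ controls the missing mode; existence is then obtained by Galerkin truncation (each truncation satisfies the same energy identity and bounds), while linearity together with \eqref{L2X} applied to the difference of two solutions gives uniqueness and continuous dependence. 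The main technical points are verifying the asymptotic $g_k^{(e)}\sim k^3$ from the telescoping recursion and correctly handling the boundary anomalies at $k=1$ (self-coupling) and $k=2$ (kernel direction), which are exactly what produce the dissipative $-\tfrac{3}{2}|\eta_1^{(e)}|^2$ term and force the exclusion of $k=2$ from the norm.
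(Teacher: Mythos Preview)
Your approach is essentially identical to the paper's: both compute $L_2$ on the cosine basis to obtain the same tridiagonal ODE system, then choose the weights $g_k^{(e)}$ via the recursion $g_k^{(e)}B_{k+2}+g_{k+2}^{(e)}A_k=0$ (with $g_2^{(e)}=0$) so that all cross terms cancel and only the diagonal $k=1$ contribution survives. One trivial slip: the surviving term in $\frac{d}{dt}\|\eta\|_X^2$ is $-\tfrac{3}{2}|\eta_1^{(e)}|^2$, not $-\tfrac{3}{4}$ (you dropped the factor of $2$ from $2g_1\eta_1\dot\eta_1$); this is exactly the coefficient in \eqref{L2X}. Your treatment of the asymptotic $g_k^{(e)}\sim k^3$ via telescoping and of the well-posedness via Galerkin is more detailed than what the paper supplies, but the underlying argument is the same.
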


\begin{rem}
This result is analogous to an observation in \cite{JiaSS} that the linearized equation of \eqref{DG} at $-\sin \theta$ has a conserved (semi)norm. The exact forms of $g_k^{(e)}$ will be given in Section 4. We did not find identities like \eqref{L2X} for linearization of \eqref{DG} at higher excited states, i.e. $\sin k\theta$ with $k\ge 3$. Besides, theorem \ref{excitedSL} doesn't hold for general odd initial data either.
\end{rem}

There are some other aspects on the studies of the De Gregorio modification of the CLM model, see for instance, \cite{BKP, CC, CHKLSY, EK, EKW, OSW, W}. The remaining part of this paper is organized as follows: In Section 2, we derive an identity for the new conserved quantity $\big\|\partial_\theta\big(\sqrt{\omega(t, \cdot)}\,\big)\big\|_{L^2}$ and prove Theorem \ref{GW}. We introduce a new basis of functions in Section 3, which leads to the linear stability of the ground state $-\sin \theta$. Then we prove nonlinear stability as stated in Theorem \ref{grandstates} and Theorem \ref{grandstates'}. The last section is devoted to a careful analysis  of the linearized equation at the excited state $-\sin 2 \theta$ for both odd and even data and proving Theorem \ref{excitedSL}.

\section{Global Wellposedness with Non-negative Initial Vorticity}\label{Sec-setup}

\begin{proof}[Proof of Theorem \ref{GW}]
First of all, let us assume that $\sqrt{\omega}$ is smooth enough, for example $\sqrt{\omega} \in C([0, T]; H^2(\Omega))$, where $\omega$ is a solution to \eqref{DG}, with $\int_{\mathcal{S}^1}u(t, \theta)d\theta \equiv 0$ if $\Omega = \mathcal{S}^1$ or $u(t, 0) \equiv 0$ if $\Omega = \mathbb{R}$. We are going to derive some a priori estimates for $\omega$.

By Sobolev imbedding, one has $\sqrt{\omega} \in L^\infty([0, T] \times \Omega)$, which gives that $\omega \in L^\infty([0, T] \times \Omega)$. As a consequence, one has $\omega \in L^\infty([0, T]; H^2(\Omega))$. By the anti-symmetry property of the Hilbert transform, it is clear that
$$\partial_t \int_{\Omega} \omega dx = \int_{\Omega} (-u \partial_\theta\omega + H\omega \omega ) dx = 2\int_{\Omega} H\omega \omega dx = 0.$$
Hence,
\begin{equation} \label{ap1}
\|\omega(t, \cdot)\|_{L^1(\Omega)} = \|\omega_{\scriptscriptstyle in}\|_{L^1}.
\end{equation}

Next, using
\begin{equation} \nonumber
\partial_t \sqrt{\omega} = -u \partial_\theta\sqrt{\omega} + \frac12 \sqrt{w} H\omega\quad {\rm on}\ \{\theta: \omega > 0\},
\end{equation}
we can take derivative on both sides of the above equation to derive that
$$\partial_t \partial_\theta \sqrt{\omega} = -u \partial_\theta^2 \sqrt{\omega} - \partial_\theta u \partial_\theta \sqrt{\omega} + \frac12 \partial_\theta \sqrt{w} H\omega + \frac12 \sqrt{w} \partial_\theta H\omega \quad {\rm on}\ \{\theta: \omega > 0\}.$$
Further calculations give that
\begin{align*}
\frac12 \partial_t (\partial_\theta\sqrt{\omega})^2 &= - \frac12 u \partial_\theta((\partial_\theta \sqrt{\omega})^2) - \partial_\theta u (\partial_\theta \sqrt{\omega})^2 + \frac12 (\partial_\theta \sqrt{\omega})^2 H\omega   + \frac14 \partial_\theta \omega \partial_\theta H\omega \\
&=- \frac12 u \partial_\theta((\partial_\theta \sqrt{\omega})^2) - \frac12 (\partial_\theta \sqrt{\omega})^2 H\omega + \frac14  \partial_\theta \omega  H \partial_\theta\omega.
\end{align*}
Note that the above equation is also true on $\{\theta: \omega = 0\}$, since at such points $\omega$ reaches its minimum. Integrating over $\Omega$ and using the fact $\int (H \partial_\theta \omega)  \partial_\theta \omega d\theta = 0$, we finally arrive at
$$\partial_t \int_{\Omega} \big(\partial_\theta \sqrt{w}\big)^2 dx = 0.$$
Hence, we also have
\begin{equation}\label{ap2}
\|\partial_\theta\sqrt{\omega(t, \cdot)}\|_{L^2(\Omega)} = \|\partial_\theta \sqrt{\omega_{\scriptscriptstyle in}}\|_{L^2(\Omega)}.
\end{equation}
The above argument certainly implies that $\sup_{0\le t \le T}\|\omega\|_{H^1(\Omega)} \leq C_0$ for some constant $C_0$  depending only on $\|\omega_{\scriptscriptstyle in}\|_{L^1}$ and $\|\partial_\theta\sqrt{\omega_{\scriptscriptstyle in}}\|_{L^2}$.

Note that uniqueness of $C([0,T];H^1)$ solution  for \eqref{DG} can be easily obtained by performing an $L^2$ energy estimate (which is similar to the uniqueness of solutions to \eqref{eq-f}. See the Appendix for details). To finish the proof of the theorem, it remains to establish a local existence and uniqueness theory of \eqref{DG} for initial data satisfying the constraints stated in the theorem.  Consider the evolution equation for $f=\sqrt{\omega}$:
\begin{equation} \label{eq-f}
\begin{cases}
\partial_t f = -u \partial_\theta f + \frac{1}{2} f H (f^2),\\
f(t=0,\cdot)=f_{\scriptscriptstyle in} =\sqrt{\omega_{\scriptscriptstyle in}} \ge 0,
\end{cases}
\end{equation}
where $u$ is determined by $\partial_\theta u=f^2$ and the chosen gauge. The construction of local (non-negative) strong solutions to \eqref{eq-f} is similar to that for the vorticity formulation of 3D Euler equations(See \cite{MB} for the standard vanishing viscosity method or particle trajectory method). For completeness, we present a proof in the Appendix.

\end{proof}

\section{Stability of the Ground State}

As has been observed in \cite{JiaSS,OSW}, \eqref{DG} has an infinite number of stationary solutions, of the form $\sin k\theta$, $\forall k\ge 1$ (up to trivial translations and multiplication by constants). We call $\sin\theta$ the ground state and $\sin k\theta$ ($k \geq 2$) the excited states (by translation $\cos k\theta$ are ground state for $k=1$ and excited states for $k\ge 2$).

\subsection{Linearized equation at $\omega = -\sin \theta$}
Consider solutions to \eqref{DG} of the form
$$\omega = -\sin \theta + \eta,\quad u = \sin\theta + v.$$
Clearly, one has
\begin{equation}\label{DGCLM-p}
\begin{cases}
\partial_t\eta + \sin\theta\partial_\theta(\eta + v) - (\eta + v)\cos\theta = \eta \partial_\theta v -  v\partial_\theta\eta \triangleq - [v, \eta],\\[-4mm]\\
H\eta = \partial_\theta v.
\end{cases}
\end{equation}
In \cite{JiaSS}, J. Hao et al. carefully studied the linearized equation for $\eta$, which reads
\begin{equation}\label{DGCLM-l}
\partial_t\eta = L\eta,
\end{equation}
where $L$ is the linear operator defined by
\begin{equation} \label{Lgroundstate}
L\eta = - [\sin\theta, \eta + v]=-\sin \theta \partial_\theta\eta + \cos \theta \eta -\sin\theta H\eta + \cos\theta v.
\end{equation}
They worked with the gauge $v(t,0)=0$ and under the assumption
\begin{equation} \label{0mode}
\int_{\mathcal{S}^1} \eta d\theta=0.
\end{equation}
This assumption is reasonable since $\int_{\mathcal{S}^1} \eta d\theta$ is an invariant both for the linear problem \eqref{DGCLM-l} and for the nonlinear problem \eqref{DG}. We do not need \eqref{0mode} for now but it will be important in Section 3.3. The representation of $L$ on the Fourier side has been computed in \cite{JiaSS}. For odd data one has
\begin{equation} \label{Lonbasis}
Le^{(o)}_{k} = A_ke^{(o)}_{k + 1} + B_ke^{(o)}_{k - 1},\quad k \geq 2,
\end{equation}
where
$$e^{(o)}_k \overset{\Delta}{=} \sin(k\theta),$$
and
\begin{equation} \label{ABformula}
A_k = - \frac{1}{2}(k - 1)(1 - \frac{1}{k}),\quad B_k = \frac{1}{2}(k + 1)(1 - \frac{1}{k}),\quad k \geq 2.
\end{equation}
For $k=1$ one has
$$Le^{(o)}_1 = 0.$$
H. Jia et al. proved exponential decay of $e^{tL}$ in a weighted $L^2$ space based on a study of spectral properties of $L$, see \cite{JiaSS} for more details.
We take a different approach from \cite{JiaSS}, by introducing a sequence of new basis functions. Denote
$$\tilde{e}^{(o)}_k=\frac{e^{(o)}_{k+1}}{k+1}-\frac{e^{(o)}_k}{k}, \quad k\ge 1.$$
Then
\begin{eqnarray*}
L\tilde{e}^{(o)}_k&=&\frac{A_{k+1}}{k+1}e^{(o)}_{k+2}+\frac{B_{k+1}}{k+1}e^{(o)}_k-\frac{A_k}{k}e^{(o)}_{k+1}-\frac{B_k}{k}e^{(o)}_{k-1}\\
&=&-\frac{k^2}{2(k+1)^2}e^{(o)}_{k+2}+\frac{k(k+2)}{2(k+1)^2}e^{(o)}_k+\frac{(k-1)^2}{2k^2}e^{(o)}_{k+1}-\frac{(k+1)(k-1)}{2k^2}e^{(o)}_{k-1}\\
&=&-\frac{k^2(k+2)}{2(k+1)^2}\left(\frac{e^{(o)}_{k+2}}{k+2}-\frac{e^{(o)}_{k+1}}{k+1}\right)+\left[\frac{(k-1)^2(k+1)}{2k^2}-\frac{k^2(k+2)}{2(k+1)^2}\right]\left(\frac{e^{(o)}_{k+1}}{k+1}-\frac{e^{(o)}_{k}}{k}\right)\\
&& + \frac{(k-1)^2(k+1)}{2k^2}\left(\frac{e^{(o)}_{k}}{k}-\frac{e^{(o)}_{k-1}}{k-1}\right)\\
&=&-\frac{k^2(k+2)}{2(k+1)^2}\tilde{e}^{(o)}_{k+1}+\left[\frac{(k-1)^2(k+1)}{2k^2}-\frac{k^2(k+2)}{2(k+1)^2}\right]\tilde{e}^{(o)}_k+\frac{(k-1)^2(k+1)}{2k^2}\tilde{e}^{(o)}_{k-1}
\end{eqnarray*}
i.e.
$$ L\tilde{e}^{(o)}_k = -d_{k+1}\tilde{e}^{(o)}_{k+1}-(d_{k+1}-d_k)\tilde{e}^{(o)}_k+d_k\tilde{e}^{(o)}_{k-1},\quad d_k=\frac{(k-1)^2(k+1)}{2k^2}.$$
Note that $d_1=0$. The above equality holds true for all $k\ge 1$. We can also include even perturbations by introducing
$$e^{(e)}_k=\cos k\theta-1, \quad k\geq 1,$$
and
$$\tilde{e}^{(e)}_k = \frac{\cos (k+1)\theta-1}{k+1} - \frac{\cos k\theta-1}{k}.$$
The constant $-1$ in the definitions are added to make sure that $e_k^{(e)}(0)=\tilde{e}_k^{(e)}(0)=0$. Set $\tilde{e}^{(e)}_0 = \cos \theta -1=e^{(e)}_1$. Similarly we have
$$Le^{(e)}_k = A_ke^{(e)}_{k + 1} + B_ke^{(e)}_{k - 1} - (1-\frac{1}{k})e^{(e)}_1, \quad k \geq 2$$
and $L\tilde{e}^{(e)}_0 = Le^{(e)}_1=0$. It follows that for $k\geq 2$,
\begin{eqnarray*}
L\tilde{e}^{(e)}_k&=&\frac{A_{k+1}}{k+1}e^{(e)}_{k+2}+\frac{B_{k+1}}{k+1}e^{(e)}_k-\frac{A_k}{k}e^{(e)}_{k+1}-\frac{B_k}{k}e^{(e)}_{k-1} + (-\frac{k}{(k+1)^2}+\frac{k-1}{k^2}) e^{(e)}_1\\
&=&-d_{k+1}\tilde{e}^{(e)}_{k+1}-(d_{k+1}-d_k)\tilde{e}^{(e)}_k+d_k\tilde{e}^{(e)}_{k-1} +\frac{k^2-k-1}{k^2(k+1)^2} \tilde{e}^{(e)}_0.
\end{eqnarray*}
Direct computation shows the above holds for $k=1$ as well:
$$L{\tilde{e}^{(e)}_1}=-\frac38 \tilde{e}^{(e)}_2 - \frac38 \tilde{e}^{(e)}_1 -\frac{1}{4} \tilde{e}^{(e)}_0.$$
Hence if we write $\eta =\sum_{k\geq 1} \tilde{\eta}^{(o)}_k \tilde{e}^{(o)}_k + \sum_{k\geq 0} \tilde{\eta}^{(e)}_k \tilde{e}^{(e)}_k$, then \eqref{DGCLM-l} can be written as the following infinite dimensional ODE system
\begin{equation*}
\begin{cases}
\partial_t \tilde{\eta}^{(o)}_k=-d_{k}\tilde{\eta}^{(o)}_{k-1}-(d_{k+1}-d_k)\tilde{\eta}^{(o)}_k+d_{k+1}\tilde{\eta}^{(o)}_{k+1}, \quad k\ge 1,\\
\partial_t \tilde{\eta}^{(e)}_k=-d_{k}\tilde{\eta}^{(e)}_{k-1}-(d_{k+1}-d_k)\tilde{\eta}^{(e)}_k+d_{k+1}\tilde{\eta}^{(e)}_{k+1}, \quad k\ge 1,
\end{cases}
\end{equation*}
where $d_1 \tilde{\eta}^{(o)}_0$ is understood to be 0. For the ``0th mode", we have
$$\partial_t \tilde{\eta}^{(e)}_0 = \sum_{k\geq 1} \frac{k^2-k-1}{k^2(k+1)^2} \tilde{\eta}^{(e)}_k.$$
Hence formally we deduce
\begin{eqnarray} \label{odddecay}
\frac12 \partial_t \sum_{k\geq 1} (\tilde{\eta}^{(o)}_k ) ^2 &\leq & \sum_{k\geq 1} -d_{k}\tilde{\eta}^{(o)}_{k-1} \tilde{\eta}^{(o)}_k-(d_{k+1}-d_k)(\tilde{\eta}^{(o)}_k)^2+d_{k+1} \tilde{\eta}^{(o)}_k \tilde{\eta}^{(o)}_{k+1} \nonumber  \\
&=& \sum_{k\geq 1} -(d_{k+1}-d_{k})(\tilde{\eta}^{(o)}_k)^2 \nonumber \\
&\le &-\frac{3}{8} \sum_{k\geq 1} (\tilde{\eta}^{(o)}_k ) ^2.
\end{eqnarray}
For the even part, we also have
\begin{eqnarray} \label{evendecay}
\frac12 \partial_t \sum_{k\geq 1} (\tilde{\eta}^{(e)}_k ) ^2
&=& \sum_{k\geq 1} -(d_{k+1}-d_{k})(\tilde{\eta}^{(e)}_k)^2\nonumber\\
&\le &-\frac{3}{8} \sum_{k\geq 1} (\tilde{\eta}^{(e)}_k ) ^2.
\end{eqnarray}
Here we have used the fact that for all $k \ge 1$,
\begin{eqnarray*}
d_{k+1}-d_k&=&\frac{k^2(k+2)}{2(k+1)^2}-\frac{(k-1)^2(k+1)}{2k^2}\\
&=&\frac{1}{2}+\frac{k^2-k-1}{2k^2(k+1)^2}\\
&\ge& \frac{3}{8}.
\end{eqnarray*}
It is important to note that $\tilde{\eta}^{(e)}_0$ has no influence on the evolution of other modes. There are a number of ways to make the calculations \eqref{odddecay} and \eqref{evendecay} rigorous (the summations involved may not converge). For instance one may use basic linear semigroup theory as follows. Consider the real Hilbert space $Y$ formally spanned by the basis functions $\tilde{e}^{(o)}_k, \,\, k\geq 1$ and $\tilde{e}^{(e)}_k, \,\,\, k\geq 0$ in which this basis is orthonormal, i.e.
$$Y = \{  \eta=\sum_{k\geq 1} \tilde{\eta}^{(o)}_k \tilde{e}^{(o)}_k + \sum_{k\geq 0} \tilde{\eta}^{(e)}_k \tilde{e}^{(e)}_k \, \big| \, \{\tilde{\eta}^{(o)}_k\}_{k\geq 1}, \{\tilde{\eta}^{(e)}_k\}_{k\geq 0} \in l^2  \}.$$
Then $L$ defines an unbounded closed operator on the Hilbert space $\tilde{Y}\overset{\Delta}{=}Y/\mathbf{R}\tilde{e}^{(e)}_0$. \eqref{odddecay} and \eqref{evendecay} implies, via a direct application of Hille-Yosida theorem that, $L$ generates a strongly continuous semigroup with the desired decay estimate
$$\norm{e^{tL}\eta(0)}_{\tilde{Y}} \le e^{-\frac{3}{8}t} \norm{\eta(0)}_{\tilde{Y}}.$$
We now deal with $\tilde{\eta}^{(e)}_0$ separately.
\begin{eqnarray*}
|\partial_t \tilde{\eta}^{(e)}_0| &=& \big|\sum_{k\geq 1} \frac{k^2-k-1}{k^2(k+1)^2} \tilde{\eta}^{(e)}_k \big|\\
&\lesssim & \norm{\eta}_{\tilde{Y}}.
\end{eqnarray*}

Hence $\tilde{\eta}^{(e)}_0$ converges exponentially to some limit, which we denote by $\tilde{\eta}^{(e)}_0(\infty)$. So far we have shown that a solution to our linearized equation converges exponentially to $\tilde{\eta}^{(e)}_0(\infty) (\cos \theta -1)$ in $Y$. We state it as the following proposition.

\begin{proposition}
For any initial data $\eta_{\scriptscriptstyle in} \in Y$, there exists a finite number $\tilde{\eta}^{(e)}_0(\infty)$ such that
$$\norm{e^{tL}\eta_{\scriptscriptstyle in} -\tilde{\eta}^{(e)}_0(\infty) (\cos \theta -1)}_{Y} \lesssim e^{-\frac38 t}\norm{\eta_{\scriptscriptstyle in}}_{\tilde{Y}}.$$
\end{proposition}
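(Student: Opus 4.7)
The plan is to organize the formal calculations preceding the proposition into a rigorous statement by splitting the evolution into the ``good'' part, consisting of all modes except $\tilde{e}^{(e)}_0$, on which $L$ is strictly dissipative, and a separate first-order scalar ODE for the zeroth mode $\tilde{\eta}^{(e)}_0$. The key structural observation, already visible from the formulas above the proposition, is that $L$ acts on the basis $\{\tilde{e}^{(o)}_k\}_{k\ge 1}\cup\{\tilde{e}^{(e)}_k\}_{k\ge 1}$ in a tridiagonal fashion with entries $(-d_k,-(d_{k+1}-d_k),d_{k+1})$ (independently in the odd and even sectors), and the evolution of these modes does not feel $\tilde{\eta}^{(e)}_0$. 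Consequently $L$ descends to a densely defined operator on $\tilde{Y}=Y/\mathbf{R}\tilde{e}^{(e)}_0$.

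First I would set $L$ up as an unbounded operator on $\tilde{Y}$ with domain $D(L)=\{\eta\in\tilde{Y}:\{k\tilde{\eta}^{(\sigma)}_k\}\in \ell^2,\ \sigma\in\{o,e\}\}$, which is dense because finitely supported sequences lie in it. A direct inner product computation with Abel summation cancels all cross terms and yields
$$\langle L\eta,\eta\rangle_{\tilde{Y}} = -\sum_{k\ge 1}(d_{k+1}-d_k)\bigl[(\tilde{\eta}^{(o)}_k)^2+(\tilde{\eta}^{(e)}_k)^2\bigr]\le -\tfrac{3}{8}\|\eta\|_{\tilde{Y}}^2,$$
so that $L+\tfrac{3}{8}I$ is dissipative. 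To invoke the Lumer--Phillips theorem I would verify the range condition for some $\lambda>0$ by a finite-dimensional truncation: solve the truncated $N$-mode tridiagonal system $(\lambda I - L_N)\eta_N = P_N f$, extract the uniform bound $\|\eta_N\|_{\tilde{Y}}\le \lambda^{-1}\|f\|_{\tilde{Y}}$ from the truncated dissipativity, and pass to a weak limit whose coefficients satisfy the full banded recursion. This produces a strongly continuous contraction semigroup on $\tilde{Y}$ with $\|e^{tL}\|_{\tilde{Y}\to\tilde{Y}}\le e^{-\frac{3}{8}t}$.

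For the zeroth mode I would integrate the scalar ODE $\partial_t \tilde{\eta}^{(e)}_0 = \sum_{k\ge 1} c_k \tilde{\eta}^{(e)}_k$ with $c_k = (k^2-k-1)/(k^2(k+1)^2)$. Since $\{c_k\}\in \ell^2$, Cauchy--Schwarz and the $\tilde{Y}$-decay obtained in the previous step give $|\partial_t \tilde{\eta}^{(e)}_0(t)|\lesssim \|\eta(t)\|_{\tilde{Y}}\lesssim e^{-\frac{3}{8}t}\|\eta_{\scriptscriptstyle in}\|_{\tilde{Y}}$, so $\tilde{\eta}^{(e)}_0(t)$ is Cauchy at exponential rate and converges to some $\tilde{\eta}^{(e)}_0(\infty)$ with $|\tilde{\eta}^{(e)}_0(t) - \tilde{\eta}^{(e)}_0(\infty)|\lesssim e^{-\frac{3}{8}t}\|\eta_{\scriptscriptstyle in}\|_{\tilde{Y}}$. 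Reassembling the expansion, and using $\tilde{e}^{(e)}_0=\cos\theta-1$, converts the $\tilde{Y}$-decay plus the scalar decay into the claimed $Y$-norm bound.

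The main obstacle I anticipate is the generation step: verifying the range condition for $\lambda I - L$ on $\tilde{Y}$. Dissipativity is a purely algebraic consequence of the tridiagonal structure and the inequality $d_{k+1}-d_k\ge \tfrac{3}{8}$ proved in the excerpt, but surjectivity requires either solving an infinite banded system directly (transfer-matrix / continued-fraction argument, exploiting positivity of $d_k$) or the truncate-and-pass-to-the-limit route sketched above. The linear growth $d_k\sim k/2$ is exactly what keeps $D(L)$ natural and the truncated resolvents uniformly controlled, so I expect the latter approach to be the cleanest.
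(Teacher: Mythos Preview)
Your proposal is correct and follows essentially the same route as the paper: split off the mode $\tilde{e}^{(e)}_0$, use the dissipativity inequality $\langle L\eta,\eta\rangle_{\tilde{Y}}\le -\tfrac{3}{8}\|\eta\|_{\tilde{Y}}^2$ together with a semigroup generation theorem on $\tilde{Y}$, and then integrate the scalar ODE for $\tilde{\eta}^{(e)}_0$ using Cauchy--Schwarz and the exponential decay of the other modes. The only cosmetic difference is that the paper cites Hille--Yosida rather than Lumer--Phillips and does not spell out the range/resolvent condition at all; your truncation argument for surjectivity of $\lambda I-L$ is a reasonable way to fill in what the paper leaves implicit.
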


\subsection{Equivalence of norms}
In this section, we point out that the $Y$-norm is actually equivalent to a weighted $\dot{H}^1$ norm. This observation is essential for proving nonlinear stability, since our basis functions $\tilde{e}^{(o)}_k$ and $\tilde{e}^{(e)}_k$ are not helpful for estimating the nonlinear terms. We recall the Hilbert space $\mathcal{H}_{DW}$ defined before:
\begin{equation*}
\mathcal{H}_{DW} = \big\{\eta \in H^1(S^1)\big| \eta(0) =  0,\quad \int_{- \pi}^\pi\frac{|\partial_\theta\eta|^2}{\sin^2\frac{\theta}{2}}d\theta < \infty\big\}.
\end{equation*}
And the corresponding inner product of $(\mathcal{H}_{DW}, g)$ is defined to be
\begin{equation*}
\langle\xi, \eta \rangle_g = \frac{1}{4\pi}\int_{- \pi}^\pi\frac{\partial_\theta\xi\partial_\theta\eta}{\sin^2\frac{\theta}{2}}d\theta.
\end{equation*}
We claim that there is an isometry between $Y$ and $\mathcal{H}_{DW}$, given by the following lemma:
\begin{lem}$\{\tilde{e}^{(o)}_k, k\ge 1\} \cup \{\tilde{e}^{(e)}_l, l\ge 0\}$ is a complete orthonormal basis for $\mathcal{H}_{DW}$.
\end{lem}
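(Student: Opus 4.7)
The plan is to realize $\mathcal{H}_{DW}$ as a codimension-one subspace of $L^2(-\pi,\pi)$ through the natural isometry $T\eta := (\partial_\theta\eta)/\sin(\theta/2)$, and to match our basis with a classical half-integer trigonometric system.

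First I would compute, via product-to-sum identities,
$$T\tilde e^{(o)}_k = -2\sin\bigl((k+\tfrac12)\theta\bigr)\ (k\ge 1),\quad T\tilde e^{(e)}_k = -2\cos\bigl((k+\tfrac12)\theta\bigr)\ (k\ge 1),\quad T\tilde e^{(e)}_0 = -2\cos(\tfrac{\theta}{2}).$$
Since the $g$-inner product factors as $\langle\xi,\eta\rangle_g = \tfrac{1}{4\pi}\int_{-\pi}^\pi (T\xi)(T\eta)\,d\theta$, the orthonormality assertion reduces at once to the classical identities $\int_{-\pi}^\pi\sin((j+\tfrac12)\theta)\sin((k+\tfrac12)\theta)\,d\theta = \pi\delta_{jk}$ and its cos-cos and sin-cos analogues, together with $\int_{-\pi}^\pi\cos^2(\theta/2)\,d\theta = \pi$.

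Next I would characterize the image of $T$. The periodicity of $\eta\in\mathcal{H}_{DW}$ forces $\int_{-\pi}^\pi\partial_\theta\eta\,d\theta = 0$, i.e.\ $\int_{-\pi}^\pi(T\eta)\sin(\theta/2)\,d\theta = 0$. Conversely, given $g\in L^2(-\pi,\pi)$ with $\int_{-\pi}^\pi g\sin(\theta/2)\,d\theta=0$, defining $\eta(\theta):=\int_0^\theta g(\phi)\sin(\phi/2)\,d\phi$ yields an element of $\mathcal{H}_{DW}$ with $T\eta=g$: periodicity comes from the vanishing moment condition, $\eta(0)=0$ by construction, $\partial_\theta\eta\in L^2$ since $|g\sin(\theta/2)|\le|g|$, and $\int|\partial_\theta\eta|^2/\sin^2(\theta/2)\,d\theta = \|g\|_{L^2}^2 < \infty$. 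Hence $T$ is an isometric bijection from $\mathcal{H}_{DW}$ onto
$$V := \Bigl\{g\in L^2(-\pi,\pi)\,:\, \int_{-\pi}^\pi g(\theta)\sin(\tfrac{\theta}{2})\,d\theta = 0\Bigr\}.$$

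Finally I would invoke the classical completeness of $\{\sin((k+\tfrac12)\theta)\}_{k\ge 0}\cup\{\cos((k+\tfrac12)\theta)\}_{k\ge 0}$ in $L^2(-\pi,\pi)$, which follows because, up to real/imaginary parts, these functions are the values of $e^{i\theta/2}e^{ik\theta}$, $k\in\mathbb{Z}$, and multiplication by the unitary $e^{i\theta/2}$ preserves completeness of the standard Fourier basis on $(-\pi,\pi)$. Deleting the single generator $\sin(\theta/2)$ leaves a complete orthogonal system for its orthogonal complement $V$, and pulling back through $T^{-1}$ recovers precisely $\{\tilde e^{(o)}_k\}_{k\ge 1}\cup\{\tilde e^{(e)}_k\}_{k\ge 0}$. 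The only real bookkeeping is verifying that $T^{-1}g$ lands in $\mathcal{H}_{DW}$ (periodicity, $H^1$-regularity, vanishing at $0$, and the weighted bound), but all of these follow immediately from the construction above, so I do not expect any serious obstacle in this argument.
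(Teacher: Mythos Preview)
Your proposal is correct and follows essentially the same route as the paper. Both arguments hinge on the computation $\partial_\theta\tilde e^{(o)}_k/\sin(\theta/2)=-2\sin((k+\tfrac12)\theta)$, $\partial_\theta\tilde e^{(e)}_l/\sin(\theta/2)=\pm 2\cos((l+\tfrac12)\theta)$, the observation that periodicity of $\eta$ supplies exactly the missing orthogonality to $\sin(\theta/2)$, and the classical completeness of the half-integer trigonometric system in $L^2(-\pi,\pi)$; the only difference is that you package this via the explicit isometry $T$ onto the hyperplane $V$, whereas the paper phrases completeness as ``$\xi$ orthogonal to every basis vector $\Rightarrow \partial_\theta\xi=0\Rightarrow\xi=0$''.
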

\begin{proof} First we notice that
$$\frac{\partial_\theta\tilde{e}^{(o)}_k}{\sin\frac{\theta}{2}} = -2\sin (k+\frac12)\theta, \quad \frac{\partial_\theta\tilde{e}^{(e)}_l}{\sin\frac{\theta}{2}} = 2\cos (l+\frac12)\theta, \quad \forall \, k\ge 1, l\ge 0. $$
Hence
$$\langle\tilde{e}^{(o)}_k,\tilde{e}^{(o)}_l \rangle_g=\delta_{kl},\quad k,l\ge 1,$$
$$\langle\tilde{e}^{(o)}_k,\tilde{e}^{(e)}_l \rangle_g=0,\quad k\ge 1,l\ge 0,$$
$$\langle\tilde{e}^{(e)}_k,\tilde{e}^{(e)}_l \rangle_g=\delta_{kl},\quad k,l\ge 0.$$
It remains to show completeness. Assume that $\xi\in \mathcal{H}_{DW}$, satisfying
$$\langle\xi,\tilde{e}^{(o)}_k\rangle_g=0, \,\,\langle\xi,\tilde{e}^{(e)}_l\rangle_g=0, \,\, \forall \,k\ge 1 , l\ge 0,$$
i.e.,
$$\int_{-\pi}^{\pi}\frac{\partial_\theta\xi}{\sin\frac{\theta}{2}}\sin(k+\frac{1}{2})\theta d\theta=0,\quad \forall k\ge 1.$$
and
$$\int_{-\pi}^{\pi}\frac{\partial_\theta\xi}{\sin\frac{\theta}{2}}\cos(l+\frac{1}{2})\theta d\theta=0,\quad \forall l\ge 0.$$
We note that the first equality holds for $k=0$ as well, since
$$\int_{-\pi}^\pi \partial_\theta\xi d\theta=0.$$
Since $\{\sin(k+\frac{1}{2})\theta, k\ge 0\} \cup \{\cos(l+\frac{1}{2})\theta, l\ge 0\}$ forms a complete basis of $L^2(S^1)$, thus we have
$\partial_\theta\xi=0$, which implies $\xi=0$.
\end{proof}

Hence we can now identify $Y$ as $\mathcal{H}_{DW}$. Clearly there is a continuous embedding $Y = \mathcal{H}_{DW} \hookrightarrow H^1$. Under the condition \eqref{0mode}, proposition 3.1 can be improved using this embedding along with the invariance of $\int_{-\pi}^\pi \eta d\theta$.

\begin{proposition}
For any initial data $\eta_{\scriptscriptstyle in} \in \mathcal{H}_{DW}$ satisfying $\int_{-\pi}^\pi \eta_{\scriptscriptstyle in} d\theta=0$, we have
$$\norm{e^{tL}\eta_{\scriptscriptstyle in}}_{\mathcal{H}_{DW}} \lesssim e^{-\frac38 t}\norm{\eta_{\scriptscriptstyle in}}_{\mathcal{H}_{DW}}.$$
\end{proposition}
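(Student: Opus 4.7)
The plan is to deduce Proposition 3.2 directly from Proposition 3.1. That proposition already yields exponential decay in $\mathcal{H}_{DW}$ modulo the single nondecaying mode $\tilde{\eta}^{(e)}_0(\infty)(\cos\theta - 1)$, so the entire task reduces to showing that the limit coefficient $\tilde{\eta}^{(e)}_0(\infty)$ vanishes whenever $\int_{-\pi}^\pi \eta_{\scriptscriptstyle in}\, d\theta = 0$.

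The key observation is that the linear functional $\eta \mapsto \int_{-\pi}^\pi \eta\, d\theta$ is conserved under the linearized flow $e^{tL}$; this is noted in the paper right after \eqref{0mode}, and is easily verified by integrating \eqref{Lgroundstate} against $1$ and using integration by parts, $\partial_\theta v = H\eta$, antisymmetry of $H$ on $L^2$, and the identity $H\sin\theta = -\cos\theta$. Hence
$$\int_{-\pi}^\pi e^{tL}\eta_{\scriptscriptstyle in}\, d\theta = \int_{-\pi}^\pi \eta_{\scriptscriptstyle in}\, d\theta = 0 \quad \text{for all } t \ge 0.$$

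By the orthonormal-basis lemma just established, $Y$ and $\mathcal{H}_{DW}$ are isometric, and the quotient norm satisfies $\|\cdot\|_{\tilde{Y}} \le \|\cdot\|_{\mathcal{H}_{DW}}$. Thus Proposition 3.1 reads
$$\bigl\|e^{tL}\eta_{\scriptscriptstyle in} - \tilde{\eta}^{(e)}_0(\infty)(\cos\theta - 1)\bigr\|_{\mathcal{H}_{DW}} \lesssim e^{-\frac{3}{8}t}\|\eta_{\scriptscriptstyle in}\|_{\mathcal{H}_{DW}}.$$
Since $\mathcal{H}_{DW} \hookrightarrow H^1(\mathcal{S}^1) \hookrightarrow L^1(\mathcal{S}^1)$ continuously, this bound transfers to convergence of the integrals $\int_{-\pi}^\pi \cdot\, d\theta$. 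Passing to the limit $t \to \infty$ and invoking the mean conservation above yields
$$0 = \tilde{\eta}^{(e)}_0(\infty)\int_{-\pi}^\pi (\cos\theta - 1)\, d\theta = -2\pi\,\tilde{\eta}^{(e)}_0(\infty),$$
so $\tilde{\eta}^{(e)}_0(\infty) = 0$. Substituting this back into the displayed estimate gives exactly the claim.

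There is essentially no serious obstacle, since all nontrivial analytic work is already packaged inside Proposition 3.1 and the orthonormal-basis lemma. The only additional ingredient is the mean conservation under $L$, which is both intuitively natural (the operator $L$ is the linearization of a flow that preserves the mean) and straightforward to verify directly, as sketched above.
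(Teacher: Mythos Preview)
Your proof is correct and follows essentially the same approach as the paper: use conservation of the mean under $e^{tL}$, pass to the limit via Proposition 3.1 and the embedding $\mathcal{H}_{DW}\hookrightarrow L^1$, conclude $\tilde{\eta}^{(e)}_0(\infty)=0$, and substitute back. The paper's proof is just a terser version of exactly this argument.
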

\begin{proof}
From \eqref{Lgroundstate} it is easy to see that $\int_{-\pi}^\pi e^{tL}\eta_{\scriptscriptstyle in} d\theta \equiv 0$ is conserved. Passing to limit, we obtain
$$\int_{-\pi}^\pi \tilde{\eta}^{(e)}_0(\infty) (\cos \theta -1) d\theta=0.$$
Hence $\tilde{\eta}^{(e)}_0(\infty)=0$.
\end{proof}

The constraint $\int_{S^1} \eta d\theta =0 $ for $\eta \in \mathcal{H}_{DW}$ is equivalent to
\begin{equation} \label{0condition}
\tilde{\eta}_0^{(e)}=\sum_{k\ge 1} \frac{1}{k(k+1)}\tilde{\eta}_k^{(e)}.
\end{equation}
Recall that the space $\tilde{Y}$ is defined as $Y/\mathbf{R}\tilde{e}^{(e)}_0$ in which the norm is given by
$$\norm{\eta}^2_{\tilde{Y}} = \sum_{k\ge 1}{|\tilde{\eta}_k^{(o)}|^2} + \sum_{l\ge 1}{|\tilde{\eta}_l^{(e)}|^2}.$$
Hence \eqref{0condition} implies that
\begin{equation} \label{tildeY-Hdw}
\norm{\eta}_{\mathcal{H}_{DW}} \lesssim \norm{\eta}_{\tilde{Y}} \le \norm{\eta}_{\mathcal{H}_{DW}}.
\end{equation}
This observation will be useful in the next section.
\subsection{Nonlinear stability}
\begin{proof}[Proof of Theorem \ref{grandstates}]
Consider the nonlinear equation for $\eta = \omega+\sin x$,
\begin{equation} \label{nl_groundstate}
\begin{cases}
\eta_t=L\eta+\partial_\theta v\eta-v\partial_\theta\eta,\\
\partial_\theta v=H\eta, \quad v(t,0)=0.
\end{cases}
\end{equation}
To avoid the technical difficulties caused by the evolution of $\tilde{\eta}_0^{(e)}$, we work with the natural inner product $\tilde{g}$ in $\tilde{Y}$. The discussion in Section 3.1 gives
$${\langle L\eta ,\eta \rangle}_{\tilde{g}} \leq -\frac38 \norm{\eta}_{\tilde{Y}}^2.$$
From \eqref{nl_groundstate}, taking $\tilde{g}$-inner product with $\eta$, we get
\begin{eqnarray*}
\frac12\frac{d}{dt}\|\eta\|_{\tilde{Y}}^2 &=& {\langle L\eta ,\eta \rangle}_{\tilde{g}} + {\langle \partial_\theta v\eta - v \partial_\theta\eta,\eta\rangle}_{\tilde{g}} \\
&\le&-\frac38 \norm{\eta}_{\tilde{Y}}^2+{\langle \partial_\theta v\eta-v\partial_\theta\eta,\eta - \tilde{\eta}_0^{(e)} (\cos \theta -1) \rangle}_g.
\end{eqnarray*}
where $g$ is the inner product in $\mathcal{H}_{DW}$. We estimate the second term as
\begin{eqnarray*}
\langle \partial_\theta v\eta-v\partial_\theta\eta,\eta- \tilde{\eta}_0^{(e)} (\cos \theta -1)\rangle_g&=&\frac{1}{4\pi}\int_{- \pi}^\pi\frac{(\partial^2_{\theta} v\eta-v\partial^2_\theta\eta)(\partial_\theta\eta+\tilde{\eta}_0^{(e)}\sin \theta)}{\sin^2\frac{\theta}{2}}d\theta\\
&=&\frac{1}{4\pi}\int_{- \pi}^\pi\frac{\partial_\theta^2 v\eta\partial_\theta\eta}{\sin^2\frac{\theta}{2}}d\theta-\frac{1}{4\pi}\int_{- \pi}^\pi\frac{v\partial_\theta^2\eta\partial_\theta\eta}{\sin^2\frac{\theta}{2}}d\theta \\
&&+\frac{\tilde{\eta}_0^{(e)}}{2\pi} \int_{-\pi}^{\pi} (\partial_\theta^2 v\eta -v\partial_\theta^2\eta) \frac{\cos \frac{\theta}{2}}{\sin \frac{\theta}{2}} d\theta\\
&=:&\text{I}+\text{II}+\text{III}.
\end{eqnarray*}
Estimate of I:
\begin{eqnarray*}
\text{I} &\lesssim & \left\|\partial_\theta^2 v\right\|_{L^2} \left\|\frac{\eta}{\sin\frac{\theta}{2}}\right\|_{L^\infty} \left\|\frac{\partial_\theta\eta}{\sin\frac{\theta}{2}}\right\|_{L^2}\\
& \lesssim & \left\|\partial_\theta\eta\right\|_{L^2}\|\eta\|^2_{\mathcal{H}_{DW}}\\
& \lesssim & \|\eta\|^3_{\mathcal{H}_{DW}}.
\end{eqnarray*}
For the second line we have used the following estimate
\begin{eqnarray} \label{eta/theta}
\left|\frac{\eta}{\sin\frac{\theta}{2}}\right|&\lesssim& \left|\frac{1}{\theta}\int_0^{\theta}\partial_\theta\eta(\tau)d\tau\right|\nonumber\\
&\lesssim &\frac{1}{\theta}\left\|\frac{\partial_\theta\eta}{\sin\frac{\theta}{2}}\right\|_{L^2}\left(\int_0^{\theta}\sin^2\frac{\tau}{2} d\tau\right)^{\frac 12}\nonumber\\
&\lesssim& \|\eta\|_{\mathcal{H}_{DW}}.
\end{eqnarray}
Estimate of II:
\begin{eqnarray*}
\text{II} &=& -\frac{1}{8\pi} \int_{-\pi}^{\pi}\frac{v\partial_\theta(\partial_\theta\eta)^2}{|\sin\frac{\theta}{2}|^2} d\theta\\
&=& \frac{1}{8\pi} \int_{-\pi}^\pi \frac{\partial_\theta v(\partial_\theta\eta)^2}{|\sin\frac{\theta}{2}|^2} d\theta+ \frac{1}{8\pi} \int_{-\pi}^\pi v(\partial_\theta\eta)^2 \partial_\theta\left(\frac{1}{|\sin\frac{\theta}{2}|^2}\right)d\theta\\
&\lesssim& \norm{\partial_\theta v}_{L^\infty}\norm{\frac{\partial_\theta\eta}{\sin\frac{\theta}{2}}}_{L^2}^2+ \int_{-\pi}^\pi \big| v(\partial_\theta\eta)^2\frac{\cos\frac{\theta}{2}}{\sin^3\frac{\theta}{2}} \big| d\theta\\
&\lesssim& \norm{\partial_\theta\eta}_{L^2} \norm{\eta}_{\mathcal{H}_{DW}}^2+\norm{\frac{v}{\sin\frac{\theta}{2}}}_{L^\infty}\norm{\eta}_{\mathcal{H}_{DW}}^2\\
&\lesssim& \norm{\eta}^3_{\mathcal{H}_{DW}}.
\end{eqnarray*}
For the last inequality we have used Sobolev embedding as follows
\begin{equation} \label{v/theta}
\norm{\frac{v}{\sin\frac{\theta}{2}}}_{L^\infty}\lesssim \norm{\partial_\theta v}_{L^\infty}\lesssim \norm{\partial_\theta^2 v}_{L^2}\lesssim \norm{\partial_\theta \eta}_{L^2}.
\end{equation}
Estimate of III:
\begin{eqnarray*}
\text{III} &\lesssim& \norm{\eta}_{\mathcal{H}_{DW}} \int_{\mathcal{S}^1} \left| \frac{(\eta \partial_\theta H\eta +\partial_\theta\eta H\eta ) \cos \frac{\theta}{2}}{\sin \frac{\theta}{2}} \right| + \left| \frac{v\partial_\theta\eta}{\sin^2 \frac{\theta}{2}} \right|d\theta  \\
&\lesssim& \norm{\eta}_{\mathcal{H}_{DW}} \big( \norm{\frac{\eta}{\sin \frac{\theta}{2}}}_{L^\infty} \norm{\partial_\theta H\eta}_{L^2} + \norm{\eta}_{\mathcal{H}_{DW}} \norm{H\eta}_{L^\infty} + \norm{\frac{v}{\sin \frac{\theta}{2}}}_{L^\infty}  \norm{\eta}_{\mathcal{H}_{DW}} \big)\\
&\lesssim& \norm{\eta}^3_{\mathcal{H}_{DW}}.
\end{eqnarray*}

Combining the above estimates we arrive at
\begin{equation}
\frac{d}{dt}\norm{\eta}^2_{\tilde{Y}}\le -\frac{3}{4}\norm{\eta}^2_{\tilde{Y}}+C\norm{\eta}^3_{\mathcal{H}_{DW}},
\end{equation}
for some constant $C>0$. Remembering that we are working under the condition $\int_{\mathcal{S}^1} \eta d\theta=0$, \eqref{tildeY-Hdw} is valid. Theorem \ref{grandstates} now follows easily from the above energy estimate.
\end{proof}

Theorem \ref{grandstates'} can be proved in a similar way,  with a few additional terms to be estimated.

\begin{proof}[Proof of Theorem \ref{grandstates'}]
Let $\eta(t,\theta) = \zeta(t,\theta) + \alpha(\cos \theta -1) = \omega(t,\theta) + \sin \theta + \alpha(\cos \theta -1)$, where $\alpha = \int_{\mathcal{S}^1} \zeta_{\scriptscriptstyle in} d\theta$. Then we have $\int_{\mathcal{S}^1} \eta  d\theta =0$ and $\eta$ satisfies the following evolution equations
\begin{equation*}
\begin{cases}
\eta_t=L\eta +\alpha ((1-\cos \theta) (\partial_\theta\eta+\partial_\theta v) -\sin \theta (\eta+v)) +\partial_\theta v\eta-v\partial_\theta\eta,\\
\partial_\theta v=H\eta, \quad v(t,0)=0.
\end{cases}
\end{equation*}
Taking $\tilde{g}$-inner product with $\eta$ and applying the estimates in the previous proof, we get
\begin{eqnarray*}
\frac12\frac{d}{dt}\|\eta\|_{\tilde{Y}}^2 &=& {\langle L\eta ,\eta \rangle}_{\tilde{g}} + \alpha{\langle (1-\cos \theta) (\partial_\theta\eta+\partial_\theta v) -\sin \theta (\eta+v), \eta \rangle}_{\tilde{g}} + {\langle \partial_\theta v\eta - v \partial_\theta\eta,\eta\rangle}_{\tilde{g}} \\
&\le& -\frac{3}{4}\norm{\eta}^2_{\tilde{Y}}+ \alpha{\langle (1-\cos \theta) (\partial_\theta\eta+\partial_\theta v) -\sin \theta (\eta+v), \eta-\tilde{\eta}_0^{(e)}(\cos\theta -1) \rangle}_{g} \\
&& +C\norm{\eta}^3_{\mathcal{H}_{DW}}
\end{eqnarray*}
Denote the second term on the right hand side by $S$. We handle it using integration by parts, \eqref{eta/theta} and \eqref{v/theta},
\begin{eqnarray*}
S&=&\frac{\alpha}{4\pi}\int_{\mathcal{S}^1} \left[(1-\cos \theta) (\partial_\theta^2\eta+\partial_\theta^2 v) - \cos \theta (\eta+v)\right]\frac{\partial_\theta \eta+\tilde{\eta}_0^{(e)}\sin \theta}{\sin^2 \frac{\theta}{2}} d\theta\\
&=& \frac{\alpha\tilde{\eta}_0^{(e)}}{2\pi} \int_{\mathcal{S}^1} (\partial_\theta^2\eta+\partial_\theta^2 v) \sin\theta d\theta - \frac{\alpha}{4\pi} \int_{\mathcal{S}^1} (\eta+v) \frac{\partial_\theta\eta\cos \theta}{\sin^2 \frac{\theta}{2}} d\theta - \frac{\alpha\tilde{\eta}_0^{(e)}}{2\pi} \int_{\mathcal{S}^1} (\eta+v)\frac{\cos \frac{\theta}{2}}{\sin \frac{\theta}{2}}d\theta\\
&\lesssim& |\alpha|\tilde{\eta}_0^{(e)} (\norm{\partial_\theta\eta}_{L^2} + \norm{H\partial_\theta\eta}_{L^2})+ (|\alpha| \norm{\eta}_{\mathcal{H}_{DW}} + |\alpha|\tilde{\eta}_0^{(e)})\big(\norm{\frac{\eta}{\sin \frac{\theta}{2}}}_{L^\infty}+ \norm{\frac{v}{\sin \frac{\theta}{2}}}_{L^\infty}\big)\\
&\lesssim& |\alpha| \norm{\eta}_{\mathcal{H}_{DW}}^2
\end{eqnarray*}
It remains to use the estimate $|\alpha|+\norm{\eta(0,\cdot)}_{\mathcal{H}_{DW}} \lesssim \norm{\zeta_{\scriptscriptstyle in}}_{\mathcal{H}_{DW}}$ to finish the proof.
\end{proof}

\begin{rem}
In our proof of Theorem \ref{grandstates'}, the smallness of $\alpha$ played an important role. Based on the numerical behaviour of \eqref{DG}, we conjecture that the shifted ground states $-\sin \theta -\alpha(\cos\theta-1)$ are exponentially stable for all $\alpha \in \mathbf{R}$ in some suitable spaces. For now, we only know this is true in $\mathcal{H}_{DW}$ for $|\alpha|$ small.
\end{rem}

\section{Well-posedness of the Linearized Equation at $-\sin 2\theta$}
Consider solutions of the form
$$w=-\sin2\theta+\eta,\quad u=\frac 12\sin2\theta+v.$$
Then the linearized equation for $\eta$ reads
\begin{eqnarray}
\begin{cases}
\eta_t+\frac12\sin2\theta\partial_\theta\eta-\cos2\theta\eta+\sin2\theta H\eta-2\cos2\theta v=0 ,&\\
\partial_\theta v=H\eta .&
\end{cases}
\end{eqnarray}Denote
$$L\eta=-[\sin2\theta, \frac{\eta}{2}+v],$$
then
$$\eta_t=L\eta.$$
As before, we calculate the linearized operator $L$ on the Fourier side. For odd data:
$$Le^{(o)}_k=A_k e^{(o)}_{k+2}+B_k e^{(o)}_{k-2}, \quad e^{(o)}_k=\sin k\theta , \quad k\ge 2, $$and$$Le^{(o)}_1=-\frac{1}{4}e^{(o)}_3+\frac{3}{4}e^{(o)}_1,$$
where the coefficients
$$A_k=-\frac{(k-2)^2}{4k},\quad B_k=\frac{(k+2)(k-2)}{4k}$$
are different from those in Section 3.1. Similarly, for the even data we have
$$Le^{(e)}_k=A_k e^{(e)}_{k+2}+B_k e^{(e)}_{k-2}, \quad e^{(e)}_k=\cos k\theta , \quad k\ge 2,$$
and
$$Le^{(e)}_1=-\frac{1}{4}e^{(e)}_3-\frac{3}{4}e^{(e)}_1.$$
Assume $\eta=\sum_{k\ge 1}\eta^{(o)}_k e^{(o)}_k+\eta^{(e)}_k e^{(e)}_k$ which satisfies $\int_{S^1} \eta d\theta =0$, then we have\begin{eqnarray*}
\partial_t \eta^{(o)}_1(t)&=&\frac34\eta^{(o)}_1+\frac{5}{12}\eta^{(o)}_3, \quad \partial_t \eta^{(o)}_2(t)=\frac34\eta^{(o)}_4,\\
\partial_t \eta^{(o)}_k(t)&=&A_{k-2}\eta^{(o)}_{k-2}+B_{k+2}\eta^{(o)}_{k+2},\quad k\ge 3.
\end{eqnarray*}
and
\begin{eqnarray*}
\partial_t \eta^{(e)}_1(t)&=&-\frac34\eta^{(e)}_1+\frac{5}{12}\eta^{(e)}_3, \quad \partial_t \eta^{(e)}_2(t)=\frac34\eta^{(e)}_4,\\
\partial_t \eta^{(e)}_k(t)&=&A_{k-2}\eta^{(e)}_{k-2}+B_{k+2}\eta^{(e)}_{k+2},\quad k\ge 3.
\end{eqnarray*}
Define $X \subseteqq L^2$ to be the real Hilbert space with the following inner product
$$\langle\eta, \xi\rangle_X = \sum_{k \geq 1}g^{(o)}_k\eta^{(o)}_k\xi^{(o)}_k+g^{(e)}_k\eta^{(e)}_k\xi^{(e)}_k,$$
where $g^{(o)}_k, g^{(e)}_k$ are to be determined later. For odd data, one has
\begin{eqnarray}\nonumber
&&\langle L\eta, \eta\rangle_X \\\nonumber
&=&  g^{(o)}_1\eta^{(o)}_1(\frac34\eta^{(o)}_1+\frac{5}{12}\eta^{(o)}_3)+\frac34 g^{(o)}_2\eta^{(o)}_2\eta^{(o)}_4+\sum_{k \geq 3}g^{(o)}_k\eta^{(o)}_k(A_{k - 2}\eta^{(o)}_{k - 2} + B_{k + 2}\eta^{(o)}_{k + 2}) \\\nonumber
&=& \frac{3}{4}g^{(o)}_1(\eta^{(o)}_1)^2+\sum_{k\ge 1}(g^{(o)}_kB_{k+2}+g^{(o)}_{k+2}A_k)\eta^{(o)}_k\eta^{(o)}_{k+2}.
\end{eqnarray}
Unless we set $g^{(o)}_{2k-1}=0, \,\, \forall k\geq 1$ and
\begin{equation} \label{gorecursive}
g^{(o)}_{2k}B_{2k+2}+g^{(o)}_{2k+2}A_{2k}=0,\quad k \geq 1,
\end{equation}
$\langle L\eta, \eta\rangle_X$ changes sign in general. Hence there seems no natural conserved (or decreasing) norm for odd perturbation at $-\sin 2\theta$. This accounts for the numerically observed instability of $-\sin 2\theta$, as mentioned in \cite{JiaSS}.

We remark that if we only consider initial data of the form $\eta = \sum_{k\geq 1} \eta^{(o)}_{2k}e^{(o)}_{2k}$, a conserved (semi)norm can be found using \eqref{gorecursive}. Furthermore, our methods in Section 3 can be easily adapted to prove exponential decay for such data. More precisely we set
$$\tilde{e}^{(o)}_{2k} = \frac{e^{(o)}_{2k+2}}{2k+2} - \frac{e^{(o)}_{2k}}{2k},\quad k\ge 1,$$
and compute $L$ for such basis vectors. This will lead to exponential decay in the corresponding weighted $\dot{H}^1$ space
$$\mathcal{H}_{DW} = \left\{\eta=\sum_{k\geq 1} \eta^{(o)}_{2k}e^{(o)}_{2k}\Big|\int_{\mathcal{S}^1} \frac{|\partial_\theta\eta|^2}{|\sin \theta|^2} d\theta\right\}.$$
\\

Now we turn to consider even data $\eta = \sum_{k \ge 1}\eta_k^{(e)} e_k^{(e)}$, for which the situation is quite different. We prove Theorem \ref{excitedSL} as follows.
\begin{eqnarray}\label{L2energy1}\nonumber
&&\langle L\eta, \eta\rangle_X \\\nonumber
&=&  -g^{(e)}_1\eta^{(e)}_1(\frac34\eta^{(e)}_1+\frac{5}{12}\eta^{(e)}_3)+\frac34 g^{(e)}_2\eta^{(e)}_2\eta^{(e)}_4+\sum_{k \geq 3}g^{(e)}_k\eta^{(e)}_k(A_{k - 2}\eta^{(e)}_{k - 2} + B_{k + 2}\eta^{(e)}_{k + 2}) \\
&=& -\frac{3}{4}g^{(e)}_1(\eta^{(e)}_1)^2+\sum_{k\ge 1}(g^{(e)}_kB_{k+2}+g^{(e)}_{k+2}A_k)\eta^{(e)}_k\eta^{(e)}_{k+2}.
\end{eqnarray}
Let $g^{(e)}_1 =g^{(e)}_4= 1$, $g^{(e)}_2 = 0$ and set
$$g^{(e)}_kB_{k+2}+g^{(e)}_{k+2}A_k=0,\quad k \geq 1.$$
It is easy to check that
$$
g^{(e)}_k\sim k^3, \quad k\to \infty.\\
$$
The (semi)norm defined by $g^{(e)}_k$ will be decreasing for even data. More precisely from \eqref{L2energy1}  we have
\begin{equation} \label{L2energy}
\langle L\eta, \eta\rangle_X= -\frac{3}{4}(\eta^{(e)}_1)^2.
\end{equation}
\eqref{L2energy} clearly implies Theorem \ref{excitedSL}.

\begin{rem}Here we are not able to prove exponential decay, so nonlinear stability cannot be deduced. However, according to our own numerical experiments, solution to \eqref{DG} with initial data of the form $-\sin 2 \theta + \epsilon \cos\theta$ converges to some multiple of $-\sin 2\theta$ instead of ground states. This interesting phenomenon remains to be investigated in future works.
\end{rem}

\section{Appendix}

Let us present a proof for the local wellposedness theory for \eqref{eq-f}, for completeness. We claim the following:
\begin{lemma}
Let $k\ge 1$ be an integer, and $\Omega = \mathcal{S}^1$ or $\mathbb{R}$. For any initial data $f_{\scriptscriptstyle in} \in H^k(\Omega)$ with a compact support, there exists a time $T=T(k,\|f_{\scriptscriptstyle in}\|_{H^1(\Omega)}) > 0$ and a unique solution $f \in C([0,T];H^k(\Omega))$ to \eqref{eq-f}, with $\partial_t f \in C([0,T];H^{k-1}(\Omega))$. Moreover, $f$ satisfies the following identities for all $0\le t \le T$,
\begin{equation*}
\|f\|_{L^2(\Omega)} = \|f_{\scriptscriptstyle in}\|_{L^2(\Omega)},
\end{equation*}
 and
 \begin{equation*}
\|f\|_{H^1(\Omega)} = \|f_{\scriptscriptstyle in}\|_{H^1(\Omega)}.
\end{equation*}
Besides, if $f_{\scriptscriptstyle in} \ge 0$, then the solution $f \ge 0$ for $0\le t \le T$.
\end{lemma}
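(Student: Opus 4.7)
The plan is to construct local solutions of \eqref{eq-f} by a vanishing-viscosity regularization. Consider
$$\partial_t f^\epsilon + u^\epsilon \partial_\theta f^\epsilon = \tfrac12 f^\epsilon H((f^\epsilon)^2) + \epsilon\, \partial_\theta^2 f^\epsilon, \qquad \partial_\theta u^\epsilon = (f^\epsilon)^2,$$
with non-negative mollified initial data $f^\epsilon_{\scriptscriptstyle in} \to f_{\scriptscriptstyle in}$ in $H^k$ and the prescribed gauge on $u^\epsilon$. Standard parabolic theory produces smooth $f^\epsilon$ on a short time interval; the goal is to derive $H^k$ bounds uniform in $\epsilon$ and then pass to a subsequential limit as $\epsilon \to 0$.

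The crucial point is that the $L^2$ and $H^1$ identities derived in the proof of Theorem \ref{GW} carry over to the viscous approximations up to non-positive dissipation, so that $\|f^\epsilon(t,\cdot)\|_{H^1} \le \|f^\epsilon_{\scriptscriptstyle in}\|_{H^1}$ uniformly in $\epsilon$ and $t$. Since $H^1(\Omega)\hookrightarrow L^\infty(\Omega)$ in one dimension, this yields $\|f^\epsilon\|_{L^\infty}$ and $\|(f^\epsilon)^2\|_{H^1} \lesssim \|f_{\scriptscriptstyle in}\|_{H^1}^2$, and hence $\|u^\epsilon\|_{W^{1,\infty}} \lesssim \|f_{\scriptscriptstyle in}\|_{H^1}^2$ on the relevant set via the chosen gauge and the boundedness of $H$ on $L^2$. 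For $k \ge 2$, applying $\partial_\theta^k$ to \eqref{eq-f} and using standard Kato--Ponce commutator estimates yields
$$\frac{d}{dt}\|f^\epsilon\|_{H^k}^2 \le C(\|f_{\scriptscriptstyle in}\|_{H^1})\, \|f^\epsilon\|_{H^k}^2,$$
so Gronwall's inequality gives an $\epsilon$-uniform $H^k$ bound on $[0,T]$ for some $T = T(k,\|f_{\scriptscriptstyle in}\|_{H^1}) > 0$.

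With these uniform bounds and the equation-based control of $\partial_t f^\epsilon$ in $H^{k-1}$, Aubin--Lions extracts a subsequence converging strongly in $C([0,T];H^{k-1})$, which is enough to handle the nonlocal nonlinear terms and produce a limit $f \in C([0,T];H^k)$ with $\partial_t f \in C([0,T];H^{k-1})$ solving \eqref{eq-f}. The conservation identities for $\|f(t,\cdot)\|_{L^2}$ and $\|f(t,\cdot)\|_{H^1}$ then follow from the same algebraic cancellations of Section 2 (antisymmetry of $H$ and the $\partial_\theta\sqrt{\omega}$ computation), with the dissipation disappearing in the limit and time-reversibility upgrading the inequality to equality. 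For uniqueness in $C([0,T];H^1)$, one subtracts two solutions $f_1, f_2$, sets $\delta = f_1 - f_2$, and uses the splitting $f_1 H(f_1^2) - f_2 H(f_2^2) = \delta\, H(f_2^2) + f_1\, H((f_1+f_2)\delta)$; a direct $L^2$-energy estimate then gives $\frac{d}{dt}\|\delta\|_{L^2}^2 \le C(\|f_1\|_{H^1}+\|f_2\|_{H^1})\|\delta\|_{L^2}^2$, and Gronwall closes the argument.

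For non-negativity, let $\Phi(t,\cdot)$ be the flow of $u$, which is well-defined and bi-Lipschitz since $u\in W^{1,\infty}$ on the support. Along characteristics,
$$\frac{d}{dt} f(t,\Phi(t,x)) = \tfrac12 f(t,\Phi(t,x))\, H(f^2)(t,\Phi(t,x)),$$
so $f(t,\Phi(t,x)) = f_{\scriptscriptstyle in}(x)\exp\bigl(\tfrac12 \int_0^t H(f^2)(s,\Phi(s,x))\,ds\bigr) \ge 0$ whenever $f_{\scriptscriptstyle in} \ge 0$; this also explains why zeros, and hence the compact support, propagate along the flow. The main obstacle I anticipate is cleanly closing the $H^k$ estimate in the presence of the nonlocal term $\tfrac12 f H(f^2)$: the top-order commutator $[\partial_\theta^k, u]\partial_\theta f$ and the leading contribution $f H(\partial_\theta^k(f^2))$ must both be absorbed into $\|f\|_{H^1}\,\|f\|_{H^k}$ only, which is achievable thanks to the boundedness of $H$ on $L^2$ and the embedding $H^1 \hookrightarrow L^\infty$. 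On $\mathbb{R}$, one must further localize to a neighborhood of the (propagating) compact support to make $u$ Lipschitz there, but this introduces no genuine obstruction.
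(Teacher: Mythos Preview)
Your approach is correct but takes a genuinely different route from the paper's appendix. The paper constructs solutions in two stages: first, for $H^\infty$ data, it runs the Picard iteration $\partial_t f^{(n+1)} = -u^{(n)}\partial_\theta f^{(n+1)} + \tfrac12 f^{(n+1)} H((f^{(n)})^2)$, solved at each step by characteristics, obtains uniform $H^k$ bounds inductively, and shows the sequence is Cauchy in $C([0,T_0];L^2)$; second, for general $H^k$ data, it applies a stability estimate to solutions launched from Littlewood--Paley truncations $P_{\le N}f_{\scriptscriptstyle in}$ and shows these are Cauchy directly in $C([0,T];H^k)$. Your vanishing-viscosity scheme (which the paper in fact cites as an alternative, referencing Majda--Bertozzi) trades iteration for parabolic smoothing and Aubin--Lions compactness. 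The paper's second step hands you strong $C([0,T];H^k)$ convergence of the approximants for free, whereas Aubin--Lions in your argument only delivers $C([0,T];H^{k-1})\cap L^\infty([0,T];H^k)$ (note also that $\partial_t f^\epsilon$ is bounded merely in $H^{k-2}$ because of the $\epsilon\partial_\theta^2 f^\epsilon$ term, though this still suffices for compactness); to reach $C([0,T];H^k)$ you need the standard weak-continuity plus norm-continuity upgrade via the $H^k$ energy identity for the limiting inviscid equation, which you glossed over but is routine. Uniqueness and non-negativity are handled identically in both proofs.
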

\begin{rem}
In fact, one can even obtain similar results for fractional $k$ by using Leibnitz rules for fractional derivatives, say, the Li's law or the classical Kato-Ponce inequalities. See \cite{DL} for a complete presentation.
\end{rem}
\begin{proof}
\begin{itemize}
\item \textbf{Uniqueness.} Assume that $f,g \in C([0,T];H^1)$ solves \eqref{eq-f} with the same initial data $f_{\scriptscriptstyle in}$. By $L^2$ energy estimate, we have
\begin{align*}
\frac{d}{dt} \|f-g\|_{L^2}^2 \lesssim \|f-g\|_{L^2}^2 (\|f\|_{C([0,T];H^1)}^2+\|g\|_{C([0,T];H^1)}^2).
\end{align*}
By Gronwall's inequality, for $0\le t \le T$,
$$\|f-g\|_{L^2} \equiv 0.$$
\item \textbf{Existence.}
\emph{Step 1.} First, we work with smooth initial data $f_{\scriptscriptstyle in} \in H^\infty = \cap_{N\ge 1} H^N$. We use the following iterative scheme to approximate the solution of \eqref{eq-f}:
\begin{equation*}
\begin{cases}
\partial_t f^{(n+1)} = - u^{(n)} \partial_\theta f^{(n+1)} + \frac12 f^{(n+1)} H((f^{(n)})^2), \\
f^{(n+1)}(t=0,\cdot) = f_{\scriptscriptstyle in}.
\end{cases}
\end{equation*}
At each stage $n$, $f^{(n+1)}$ can be solved using the method of characteristics, and is clearly smooth for all times. By energy estimates, we have
\begin{align*}
   \frac{d}{dt} \| f^{(n+1)}\|_{H^1}^2 &\leq  C_\star\| H ((f^{(n)})^2)\|_{H^1} \| f^{(n+1)}\|_{H^1}^2 \\
  &\leq C_\star\|f^{(n)}\|_{H^1}^2 \| f^{(n+1)}\|_{H^1}^2,
\end{align*}
where $C_\star > 1$ is an absolute positive constant whose meaning may change from line to line (in the later $C_\star$ may depend on $k$). Gronwall's inequality gives
\begin{align*}
\| f^{(n+1)}\|_{H^1}^2 \le \| f_{\scriptscriptstyle in}\|_{H^1}^2 e^{C_\star\int_0^t \|f^{(n)}\|_{H^1}^2 ds}.
\end{align*}
 An induction argument on $n$ gives, for $0\le t\le T_1=1/(2eC_\star\|f_{\scriptscriptstyle in}\|_{H^1}^2)$,
\begin{align*}
\|f^{(n+1)}\|_{H^1}^2 \le 2e\|f_{\scriptscriptstyle in}\|_{H^1}^2.
\end{align*}
More generally, using the classical Gagliardo-Nirenberg inequality, we have
\begin{align*}
\frac{d}{dt} \|f^{(n+1)}\|_{H^k}^2 &\leq C_\star\|f_{\scriptscriptstyle in}\|_{H^1}^2 \|f^{(n+1)}\|_{H^k}^2 + C_\star\|f_{\scriptscriptstyle in}\|_{H^1}^2 \|f^{(n)}\|_{H^k} \|f^{(n+1)}\|_{H^k}\\
&\leq C_\star\|f_{\scriptscriptstyle in}\|_{H^1}^2 \|f^{(n+1)}\|_{H^k}^2 + C_\star\|f_{\scriptscriptstyle in}\|_{H^1}^2 \|f^{(n)}\|_{H^k}^2.
\end{align*}
Gronwall's inequality gives
\begin{align*}
\| f^{(n+1)}\|_{H^k}^2 \le (\|f_{\scriptscriptstyle in}\|_{H^k}^2 + C_\star \|f_{\scriptscriptstyle in}\|_{H^1}^2 \int_0^t\|f^{(n)}\|_{H^k}^2ds ) e^{C_\star \int_0^t \|f^{(n)}\|_{H^1}^2 ds}
\end{align*}
Induction on $n$ gives, for $0\le t \le T_k = 1/(2eC_\star \|f_{\scriptscriptstyle in}\|_{H^1}^2)$,
\begin{align*}
\|f^{(n+1)}\|_{H^k}^2 \le 2e \|f_{\scriptscriptstyle in}\|_{H^k}^2.
\end{align*}
Hence $f^{(n)}$ is uniformly bounded in $C([0,T_k];H^k)$ for $n = 0,1,2,\cdots$.
\\
Next we claim that $f^{(n)}$ is a Cauchy sequence in $C([0,T_0],L^2)$ for some $T_0>0$. Note that
\begin{align*}
\begin{cases}
\partial_t (f^{(n+2)} - f^{(n+1)}) = - (u^{(n+1)}- u^{(n)}) \partial_\theta f^{(n+2)} - u^{(n)} \partial_\theta(f^{(n+2)} - f^{(n+1)}) \\
\qquad  \qquad \qquad \qquad \qquad + \frac12 (f^{(n+2)} - f^{(n+1)}) H((f^{(n+1)})^2) \\
\qquad  \qquad \qquad \qquad \qquad + \frac12 (f^{(n+1)}) H((f^{(n+1)})^2-(f^{(n)})^2) \\
(f^{(n+2)} - f^{(n+1)})(t=0,\cdot) = 0
\end{cases}
\end{align*}
Hence $L^2$ estimate gives
\begin{align*}
\frac{d}{dt} \|f^{(n+2)} - f^{(n+1)}\|_{L^2} \le C_\star (\|f^{(n+2)} - f^{(n+1)}\|_{L^2}+ \|f^{(n+1)} - f^{(n)}\|_{L^2}) \|f_{\scriptscriptstyle in}\|_{H^1}^2
\end{align*}
Gronwall's inequality gives
\begin{align*}
\|f^{(n+2)} - f^{(n+1)}\|_{L^2} \le C_\star \|f_{\scriptscriptstyle in}\|_{H^1}^2 \int_0^t  \|f^{(n+1)} - f^{(n)}\|_{L^2} ds \,\,e^{C_\star t \|f_{\scriptscriptstyle in}\|_{H^1}^2}
\end{align*}
Thus, for $0\le t\le T_0=1/(2eC_\star\|f_{\scriptscriptstyle in}\|_{H^1}^2)$, we have
\begin{align*}
\sup_{0\le t\le T_0}\|f^{(n+2)} - f^{(n+1)}\|_{L^2} \le \frac12 \sup_{0\le t\le T_0} \|f^{(n+1)} - f^{(n)}\|_{ L^2}
\end{align*}
This proves the claim. Let us denote the limit function by $f$, then $f\in C([0,\tilde{T}_k];L^2)\cap L^\infty([0,\tilde{T}_k]; H^k)$, with $\tilde{T}_k = \min\{T_0,T_1,T_k\}$. Moreover, we have, for $0\le t \le \tilde{T}_k$,
\begin{align*}
\|f\|_{H^k}^2 \le 2e \|f_{\scriptscriptstyle in}\|_{H^k}^2.
\end{align*}
 It is easy to check that $f$ satisfies the equation \eqref{eq-f} in the sense of distributions. Using the equation \eqref{eq-f}, we have $\partial_t f \in L^\infty([0,\tilde{T}_k];H^{k-1})$ and $f \in C([0,\tilde{T}_k];H^{k-1})$.

According to the proof of \eqref{ap1} and \eqref{ap2}, we have the identities
\begin{equation} \label{id1}
\|f\|_{L^2(\Omega)} = \|f_{\scriptscriptstyle in}\|_{L^2(\Omega)},
\end{equation}
 and
 \begin{equation} \label{id2}
\|f\|_{H^1(\Omega)} = \|f_{\scriptscriptstyle in}\|_{H^1(\Omega)}.
\end{equation}
for $0\le t \le \tilde{T}_k$. We emphasize here that $\tilde{T}_k$ depends only on $k$ and $\|f_{\scriptscriptstyle in}\|_{H^1}$.

\emph{Step 2.} Next, given a general initial data $f_{\scriptscriptstyle in} \in H^k$, we approximate $f_{\scriptscriptstyle in}$ with $P_{\le N} f_{\scriptscriptstyle in} $ and prove a $H^k$ stability result for \eqref{eq-f}. Here $N$ is a dyadic number (i.e., $\log_2 N$ is an integer) and $P_{\le N}$ is the standard Littlewood-Paley projection operator.
 Note that$$\|P_{\le N} f_{\scriptscriptstyle in}\|_{H^k} \le \|f_{\scriptscriptstyle in}\|_{H^k}.$$

 Since $P_{\le N} f_{\scriptscriptstyle in} \in H^\infty$, by \emph{Step 1}, there exists $T=\tilde{T}_{k+3}>0$ depending on $\| f_{\scriptscriptstyle in}\|_{H^1}$ and $k$, and a sequence of solutions $f_N\in C([0,T];H^{k+2})$ with the initial data $P_{\le N} f_{\scriptscriptstyle in}$, which satisfy \eqref{id1} and \eqref{id2} with $f_{\scriptscriptstyle in}$ being replaced by $P_{\leq N}f_{\scriptscriptstyle in}$. We are going to take the limit $N \to \infty$.

 Let $N'>N$. By energy estimate, we have
\begin{align*}
\frac{d}{dt} \|f_N -f_{N'}\|_{L^2}^2 \le C_\star \|f_N -f_{N'}\|_{L^2}^2 \|f_{\scriptscriptstyle in}\|_{H^1}^2.
\end{align*}
This implies that for $0\le t \le T$,
\begin{align*}
\|f_N -f_{N'}\|_{L^2}^2 \le C_\star \|P_{\le N}f_{\scriptscriptstyle in} - P_{\le {N'}} f_{\scriptscriptstyle in}\|_{L^2}^2.
\end{align*}
More generally, we have
\begin{align*}
&\quad\frac{d}{dt} \|f_N-f_{N'}\|_{H^k}^2\\
&\le C_\star \|f_{\scriptscriptstyle in}\|_{H^k}^2 \|f_N-f_{N'}\|_{H^k}^2\\
&\quad + C_\star \|f_{\scriptscriptstyle in}\|_{H^k} \|P_{\le {N}} f_{\scriptscriptstyle in}\|_{H^{k+1}} \|f_N-f_{N'}\|_{L^2} \|f_N-f_{N'}\|_{H^k}\\
&\le C_\star \|f_{\scriptscriptstyle in}\|_{H^k}^2 \|f_N-f_{N'}\|_{H^k}^2\\
&\quad + C_\star \|f_{\scriptscriptstyle in}\|_{H^k} N\|P_{\le N} f_{\scriptscriptstyle in}\|_{H^{k}} \|P_{\le N}f_{\scriptscriptstyle in} - P_{\le {N'}} f_{\scriptscriptstyle in}\|_{L^2} \|f_N-f_{N'}\|_{H^k}
\end{align*}

Note that as $N,N'\to \infty$,
$$N\|P_{\le N}f_{\scriptscriptstyle in} - P_{\le N'} f_{\scriptscriptstyle in}\|_{L^2} \leq \|P_{N \leq \cdot \le N'}f_{\scriptscriptstyle in}\|_{H^1} \to 0.$$
Using Gronwall's inequality we obtain
\begin{align*}
\sup_{0\le t \le T}\|f_N-f_{N'}\|_{H^k}^2\to 0
\end{align*}
Hence $f_N$ converges strongly in $C([0,T];H^k)$ to a limit function $f\in C([0,T];H^k)$ which satisfies the equation \eqref{eq-f} with initial data ${f_{\scriptscriptstyle in}}$. The identities \eqref{id1} and \eqref{id2} for $f_N$ and $P_{\leq N}f_{\scriptscriptstyle in}$ can be passed to the limit.

Finally, we remark that if $f_{\scriptscriptstyle in}$ is non-negative, then using the method of characteristics, $f$ is also non-negative.
\end{itemize}
\end{proof}

\section*{Acknowledgement}
The author was also in part supported by NSFC (grant No. 11725102) and National Support Program for Young Top-Notch Talents.


\frenchspacing
\bibliographystyle{plain}

\end{document}